\theoremstyle{definition}
\newtheorem{definition}{Definition}[section]
\newtheorem{notation}[definition]{Notation}
\theoremstyle{plain}
\newtheorem{lemma}[definition]{Lemma}
\newtheorem{theorem}[definition]{Theorem}
\theoremstyle{remark}
\newtheorem{remark}[definition]{Remark}
\newcommand{\bA}{\mathbb{A}}
\newcommand{\bC}{\mathbb{C}}
\newcommand{\bF}{\mathbb{F}}
\newcommand{\bP}{\mathbb{P}}
\newcommand{\bQ}{\mathbb{Q}}
\newcommand{\bR}{\mathbb{R}}
\newcommand{\bW}{\mathbb{W}}
\newcommand{\bZ}{\mathbb{Z}}
\newcommand{\calE}{\mathcal{E}}
\newcommand{\calL}{\mathcal{L}}
\newcommand{\calM}{\mathcal{M}}
\newcommand{\calO}{\mathcal{O}}
\newcommand{\calX}{\mathcal{X}}
\DeclareMathOperator{\Cl}{Cl}
\DeclareMathOperator{\Hom}{Hom}
\DeclareMathOperator{\NS}{NS}
\DeclareMathOperator{\Pic}{Pic}
\DeclareMathOperator{\Proj}{Proj}
\DeclareMathOperator{\rank}{rank}
\numberwithin{figure}{section}
\numberwithin{table}{section}
\begin{document}

\title[Normal Forms and Tyurin Degenerations of K3 Surfaces]{Normal Forms and Tyurin Degenerations of K3 Surfaces Polarised by a Rank 18 Lattice}

\author[C. F. Doran]{Charles F. Doran}
\address{Department of Mathematical and Statistical Sciences, 632 CAB, University of Alberta,  Edmonton, AB, T6G 2G1, Canada. \newline \indent Bard College, Annandale-on-Hudson, NY 12571, USA. \newline \indent Center of Mathematical Sciences and Applications, Harvard University, 20 Garden Street, Cambridge, MA 02138, USA.}
\email{charles.doran@ualberta.ca}
\thanks{Charles F. Doran (Alberta/Bard College/CMSA Harvard) was supported by the Natural Sciences and Engineering Research Council of Canada (NSERC)}

\author[J. Prebble]{Joseph Prebble}
\address{Department of Mathematical Sciences, Loughborough University, Loughborough, Leicestershire, LE11 3TU, United Kingdom.}
\email{J.Prebble@lboro.ac.uk}
\thanks{Joseph Prebble (Loughborough) was partially supported by a PhD studentship funded by the Engineering and Physical Sciences Research Council (EPSRC)}

\author[A. Thompson]{Alan Thompson}
\address{Department of Mathematical Sciences, Loughborough University, Loughborough, Leicestershire, LE11 3TU, United Kingdom.}
\email{A.M.Thompson@lboro.ac.uk}
\thanks{Alan Thompson (Loughborough) was supported by Engineering and Physical Sciences Research Council (EPSRC) New Investigator Award EP/V005545/1.}

\begin{abstract}We study projective Type II degenerations of K3 surfaces polarised by a certain rank 18 lattice, where the central fibre consists of a pair of rational surfaces glued along a smooth elliptic curve. Given such a degeneration, one may construct other degenerations of the same kind by flopping curves on the central fibre, but the degenerations obtained from this process are not usually projective. We construct a series of examples which are all projective and which are all related by flopping single curves from one component of the central fibre to the other. Moreover, we show that this list is complete, in the sense that no other flops are possible. The components of the central fibres obtained include weak del Pezzo surfaces of all possible degrees. This shows that projectivity need not impose any meaningful constraints on the surfaces that can arise as components in Type II degenerations.
\end{abstract}

\subjclass[2020]{14D06, 14J28}

\date{}
\maketitle

\section{Introduction}

A Tyurin degeneration is a semistable degeneration of K3 surfaces $\calX \to \Delta$, where $\Delta$ denotes the complex unit disc, with central fibre $X_0 = Y_1 \cup_D Y_2$ consisting of a pair of rational surfaces $Y_1$, $Y_2$ glued along a smooth elliptic curve $D$ which is anticanonical in each. They are the simplest examples of Type II degenerations of K3 surfaces.

In this paper we will be interested in studying projective Tyurin degenerations, i.e. ones where the morphism $\calX \to \Delta$ is projective. If one assumes that the smooth fibres of $\calX$ are polarised by an appropriate ample divisor, then results of Shepherd-Barron \cite{bgd4} show that projectivity can always be arranged by flopping some curves between the components of the central fibre $X_0$. 

If one further assumes that the smooth K3 fibres of $\calX$ are lattice polarised by a lattice $L$, Type II degenerations occur over $1$-cusps in the Baily-Borel compactification of the moduli space of $L$-polarised K3 surfaces. These $1$-cusps have a lattice theoretic description: they correspond to isotropic sublattices of rank $2$ in the orthogonal complement $L^{\perp}$ of $L$ inside the K3 lattice $H \oplus H \oplus H \oplus E_8 \oplus E_8$. Thus, given a Tyurin degeneration of $L$-polarised K3 surfaces, one may associate to it the lattice theoretic data of a rank $2$ isotropic sublattice of $L^{\perp}$.

The question that this paper seeks to answer is as follows. To what extent is the geometry of the central fibre $X_0 = Y_1 \cup_D Y_2$ of a projective Tyurin degeneration determined by this lattice theoretic data? In particular, can one deduce anything about the geometry of the rational surfaces $Y_i$, such as their del Pezzo degrees $K_{Y_i}^2$?
\smallskip

There is some reason to expect that something like this might hold. Indeed, given two Tyurin degenerations that are isomorphic over the punctured disc, it is known that they must differ by a sequence of flops. However, the process of flopping a curve is an analytic operation, and doing so will often destroy the projectivity of the Tyurin degeneration. Indeed, in the case of $\langle 2 \rangle$-polarised K3 surfaces, Friedman \cite[Section 5]{npgttk3s} has shown that projectivity imposes a very strong restriction, and that projective Tyurin degenerations of $\langle 2 \rangle$-polarised K3 surfaces are essentially unique up to labelling of the components. In other words, in the $\langle 2 \rangle$-polarised case, the birational geometry of the central fibre of a projective Tyurin degeneration is completely determined by the corresponding $1$-cusp in the Baily-Borel compactification, which is described by purely lattice theoretic data.

Interestingly, as we will show in this paper, such a result does not hold for general lattice polarisations. We study Tyurin degenerations of K3 surfaces polarised by the rank $18$ lattice $M = H \oplus E_8 \oplus E_8$. The moduli space of $M$-polarised K3 surfaces is well-understood (see, for example, \cite{milpk3s,nfk3smmp}). Its Baily-Borel compactification contains a single $1$-cusp, corresponding to the unique (up to isometry) rank $2$ isotropic sublattice of $M^{\perp} \cong H \oplus H$. We show that, given a projective Tyurin degeneration of $M$-polarised K3 surfaces $\calX \to \Delta$, there are several other projective Tyurin degenerations that differ from $\calX$ by flops. In fact we prove more than this: \emph{all} possible flops of $(-1)$-curves in the central fibre $X_0$ give rise to new projective Tyurin degenerations. This answers the question above in a maximally negative way: the lattice theory associated to such a projective Tyurin degeneration provides essentially no constraints upon which surfaces $Y_1$ and $Y_2$ can arise. In particular, the del Pezzo degrees  $K_{Y_i}^2$ of the components $Y_i$ in our examples take all possible values $-9 \leq K_{Y_i}^2 \leq 9$.

The approach to proving this result is explicit: we construct projective models for all of the Tyurin degenerations. As a by-product, we obtain several simple projective normal forms for $M$-polarised K3 surfaces which have not previously appeared in the literature.
\smallskip

The motivation for this paper comes from mirror symmetry. It was essentially known to Dolgachev \cite{mslpk3s} (see also \cite[Section 4]{mstdfcym}) that there is a mirror symmetric correspondence between Type II degenerations $X \rightsquigarrow X_0$ of a K3 surface $X$ and genus $1$ fibrations $\check{X} \to \bP^1$ on its mirror K3 surface $\check{X}$. Explicitly this works as follows. Given an $L$-polarised K3 surface $X$ and an isotropic vector $e \in L^{\perp}$, the mirror $\check{X}$ is defined to be a K3 surface polarised by the mirror lattice $\check{L} = e^{\perp}/\bZ e$, where the orthogonal complement of $e$ is taken inside $L^{\perp}$. Such vectors $e$ correspond to $0$-cusps in the Baily-Borel compactification of the moduli space of $L$-polarised K3 surfaces. If the $1$-cusp corresponding to a Type II degeneration is incident to such a $0$-cusp, this gives an embedding of $e$ into the rank $2$ isotropic sublattice $I$ corresponding to the $1$-cusp. This in turn gives rise to a class of self-intersection $0$ in the mirror lattice $\check{L}$, which induces a genus $1$-fibration on the mirror K3.

The DHT conjecture \cite{mstdfcym} extends this idea by suggesting that, if $X_0 = Y_1 \cup_D Y_2$ is the central fibre in a projective Tyurin degeneration, then there exists a splitting $\bP^1 = \Delta_1 \cup_{\gamma} \Delta_2$ of the base of the mirror fibration along a curve $\gamma$, such that the restriction of the fibration to $\Delta_i$ is the Landau-Ginzburg model of $Y_i$.

The results of this paper show that information about the geometry of a projective Tyurin degeneration cannot necessarily be inferred from the lattice theoretic data of the polarising lattice $L$ and an isotropic flag $e \in I \subset L^{\perp}$. Consequently, we expect that this data alone is also probably insufficient to determine the splitting of the base $\bP^1$ on the mirror. We therefore expect that a proof of the DHT conjecture for K3 surfaces will probably require the introduction of some additional data, in order to uniquely specify the projective Tyurin degeneration and the mirror splitting of $\bP^1$.
\smallskip

Focussing on the Tyurin degenerations of $M$-polarised K3 surfaces studied in this paper, the picture above can be described explicitly as follows. The Baily-Borel compactification of the moduli space of $M$-polarised K3 surfaces contains a unique $0$-cusp, which is incident to the single $1$-cusp. The corresponding mirror K3 surfaces are polarised by the lattice $H$, and a generic member of the mirror family admits a unique elliptic fibration $\check{X} \to \bP^1$ with section and $24$ singular fibres of Kodaira type $\mathrm{I}_1$.

Under the DHT conjecture, our result that an $M$-polarised K3 surface admits many different projective Tyurin degenerations should correspond to the existence of many different splittings of the base $\bP^1$ of this fibration, all of which should satisfy the requirement that the two halves have the properties required of Landau-Ginzburg models. 

There are strong hints that something like this should be true. We suspect that it should be possible to endow the base $\bP^1$ with the structure of an integral affine sphere, with singularities corresponding to the locations of singular fibres, and that this should correspond under mirror symmetry to the integral affine sphere described in \cite[Section 7]{cmek3sspt}. In this description, the possible splitting curves $\gamma \subset \bP^1$ should correspond to vertical lines passing between the singularities in \cite[Figure 5]{cmek3sspt}, and performing a single flop on the Tyurin degeneration should correspond to moving the splitting curve one space to the left or right, corresponding to moving a single $\mathrm{I}_1$ fibre from one side of $\gamma$ to the other.

Importantly, as all of the projective Tyurin degenerations constructed in this paper are polarised by the same lattice $M$ and correspond to the  the same $1$-cusp in the Baily-Borel compactification, we anticipate that the correct splitting curve $\gamma$ in the mirror cannot be determined by this data alone.
\smallskip

The structure of this paper is as follows. In Section \ref{sec:background} we discuss some necessary background from lattice theory, along with results about the geometry and moduli of $M$-polarised K3 surfaces. 

In Section \ref{sec:Tyurin} we begin by introducing some background on Tyurin degenerations. We then prove our first main result (Theorem \ref{thm:analyticclassification}) which shows that, up to labelling, the central fibre of any (analytic) Tyurin degeneration of $M$-polarised K3 surfaces must be one of $11$ distinct birational types. Note, in particular, that the set of possible components $Y_i$ occurring in such degenerations includes rational surfaces of all del Pezzo degrees $-9 \leq K_{Y_i}^2 \leq 9$, along with the Hirzebruch surfaces $\bF_1$ and $\bF_2$ (which is a deformation of $\bP^1 \times \bP^1$) which both have degree $8$. 

Next we turn our attention to the projective setting, where we give two different explicit approaches to the construction of projective normal forms for $M$-polarised K3 surfaces; we call these the \emph{linear system approach} and the \emph{toric approach}. We conclude the section by showing how these two approaches can be used to construct projective Tyurin degenerations of $M$-polarised K3 surfaces.

Section \ref{sec:constructions} constitutes the main body of the paper. In this section we prove our second main result (Theorem \ref{thm:main}), which states that all of the $11$ possible central fibres described in Theorem \ref{thm:analyticclassification} can be realised in projective degenerations. We prove this result by explicitly constructing several projective normal forms for $M$-polarised K3 surfaces, then degenerating them to obtain each of the $11$ possibilities; the constructions are summarised in Table \ref{tab:gammadegenerations}.
\smallskip

\noindent\textbf{Acknowledgments:} The authors would like to thank the anonymous referee for their helpful comments on an earlier version, which outlined an approach to show that our list of possible degenerations is complete. This has been incorporated into the paper and may be found in Section \ref{sec:analytictyurin}.
\smallskip

\noindent\textbf{Data access statement:} No datasets were generated or analysed during this study.

\section{Background on $M$-polarised K3 surfaces} \label{sec:background}

\subsection{Lattice theory}

We begin with some generalities on lattice polarisations and the lattice $M$. Here we follow Alexeev's and Engel's modification of Dolgachev's original formulation; for details we refer the reader to \cite[Section 2]{cmk3s} and \cite[Section 1]{mslpk3s}. Throughout the paper we use $M$ to denote the rank $18$ lattice
\[M := H \oplus E_8 \oplus E_8,\]
where $H$ denotes the usual hyperbolic plane lattice, the even unimodular indefinite lattice of rank $2$, and $E_8$ denotes the negative definite $E_8$ root lattice. 

Fix a vector $h \in M \otimes \bR$ such that $\langle h,h \rangle > 0$ and $h \notin L \otimes \bR$ for any sublattice $L \subset M$ with $\rank(L) < 18 =  \rank(M)$; Alexeev and Engel call such vectors \emph{very irrational}.

\begin{definition}\label{def:Mpolarised} \cite[Definition 2.10]{cmk3s} An \emph{$M$-polarised K3 surface} is a surface $X$ with at worst rational double point (ADE) singularities whose minimal resolution is a smooth K3 surface, along with a lattice embedding $\iota\colon M \hookrightarrow \NS(X)$ such that $\iota(h)$ is ample. \end{definition}

A class $\delta \in M$ with $\langle \delta,\delta\rangle = -2$ is called a \emph{root}. The lattice $M$ contains a set of roots 
\[\Delta(M)^+ := \{ \delta \in M : \langle \delta,\delta \rangle = -2, \ \langle \delta, h \rangle > 0 \}.\]
The fact that $h$ is very irrational implies that $\Delta(M)^+$ is a set of \emph{positive roots} for $M$, in other words:
\begin{itemize}
\item for every root $\delta \in M$, we either have $\delta \in \Delta(M)^+$ or $-\delta \in \Delta(M)^+$, but not both;
\item if $\delta$ is a root that can be written as a non-negative integer combination of classes from $\Delta(M)^+$, then $\delta \in \Delta(M)^+$.
\end{itemize}

From now on $X$ will denote a very general $M$-polarised K3 surface with a fixed choice of polarisation $\iota$. As $X$ is very general, $X$ is a smooth K3 surface and the polarisation $\iota$ induces an isomorphism $\NS(X) \cong M$, under which $\Delta(M)^+$ is identified with the set of classes of effective divisors with self-intersection $(-2)$. As a consequence of this identification, we will often use the same notation to refer to elements of $M$ and classes in $\NS(X)$.

A \emph{simple root} in $\Delta(M)^+$ is one that cannot be written as a non-negative integer combination of other roots from $\Delta(M)^+$; they correspond to classes of irreducible $(-2)$-curves in $\NS(X)$. There are $19$ such roots, which we label $E_0,\ldots,E_{18}$, with Dynkin diagram as in Figure \ref{fig:dualgraph}; in this diagram nodes correspond to roots and two roots $E_i$, $E_j$ are joined if and only if $\langle E_i,E_j \rangle = 1$. 

\begin{figure}
\begin{tikzpicture}[scale=0.7,thick]
\draw (2,0)--(2,1);
\draw (0,1)--(16,1);
\draw(14,0)--(14,1);

\node[above] at (0,1) {$E_1$};
\node[above] at (1,1) {$E_2$};
\node[above] at (2,1) {$E_3$};
\node[above] at (3,1) {$E_4$};
\node[above] at (4,1) {$E_5$};
\node[above] at (5,1) {$E_6$};
\node[above] at (6,1) {$E_7$};
\node[above] at (7,1) {$E_8$};
\node[above] at (8,1) {$E_9$};
\node[above] at (9,1) {$E_{10}$};
\node[above] at (10,1) {$E_{11}$};
\node[above] at (11,1) {$E_{12}$};
\node[above] at (12,1) {$E_{13}$};
\node[above] at (13,1) {$E_{14}$};
\node[above] at (14,1) {$E_{15}$};
\node[above] at (15,1) {$E_{16}$};
\node[above] at (16,1) {$E_{17}$};

\node[left] at (2,0) {$E_0$};
\node[right] at (14,0) {$E_{18}$};

\filldraw[black]
(2,0) circle (3pt)
(14,0) circle (3pt)
(1,1) circle (3pt)
(0,1) circle (3pt)
(2,1) circle (3pt)
(3,1) circle (3pt)
(4,1) circle (3pt)
(5,1) circle (3pt)
(6,1) circle (3pt)
(7,1) circle (3pt)
(8,1) circle (3pt)
(9,1) circle (3pt)
(10,1) circle (3pt)
(11,1) circle (3pt)
(12,1) circle (3pt)
(13,1) circle (3pt)
(14,1) circle (3pt)
(15,1) circle (3pt)
(16,1) circle (3pt)
;

\end{tikzpicture}
\caption{Dual graph of the configuration of simple roots in $M$.}
\label{fig:dualgraph}
\end{figure}

It is easy to show that the $19$ roots $E_i$ span $M$ as a $\bZ$-module. Indeed, the two $E_8$ factors are spanned by $\{E_0,E_1,\ldots,E_7\}$ and $\{E_{11},E_{12},\ldots,E_{18}\}$, and the generators of $H$ are
\begin{align*}
&3E_0 + 2E_1 + 4E_2+ 6E_3 + 5E_4+4E_5+3E_6+2E_7+E_8,\\
&3E_0 + 2E_1 + 4E_2+ 6E_3 + 5E_4+4E_5+3E_6+2E_7+E_8 + E_9.\\
\end{align*}
Using this fact, we will often express elements of $M$ and/or classes in $\NS(X)$ as combinations of the classes $E_i$. For ease of doing this, we introduce the following notation.

\begin{notation} \label{not:divisors} We will denote the class $\sum_{i=0}^{18}a_iE_i$, for $a_i \in \bZ$, by the notation
\[(a_0;a_1,a_2,a_3,a_4,a_5,a_6,a_7,a_8,a_9,a_{10},a_{11},a_{12},a_{13},a_{14},a_{15},a_{16},a_{17};a_{18}).\]
We separate the end terms $a_0$ and $a_{18}$ by semicolons to indicate that these correspond to the branches off the main chain of the graph.
\end{notation}

\subsection{Geometry of $M$-polarised K3 surfaces}

In this subsection we will outline some of the basic geometric properties of $M$-polarised K3 surfaces, based mostly on ideas from \cite{milpk3s}. 

A generic $M$-polarised K3 surface admits two elliptic fibrations, which Clingher and Doran \cite{milpk3s} call the \emph{standard} and \emph{alternate} fibration. The standard fibration has two singular fibres of Kodaira type $\mathrm{II}^*$ and four of type $\mathrm{I}_1$, plus a unique section. In terms of the divisors $E_i$, the $\mathrm{II}^*$ fibres are given by
\begin{align*}
&(3;2,4,6,5,4,3,2,1,0,0,0,0,0,0,0,0,0;0),\\
&(0;0,0,0,0,0,0,0,0,0,1,2,3,4,5,6,4,2;3),
\end{align*}
and the section is $E_9$. The linear equivalence of these two fibres gives rise to the unique relation between the classes $E_i$:
\begin{equation}\label{eq:E8relation} (3;2,4,6,5,4,3,2,1,0,-1,-2,-3,-4,-5,-6,-4,-2;-3) \sim 0.\end{equation}

The alternate fibration has one singular fibre of Kodaira type $\mathrm{I}_{12}^*$ and six of type $\mathrm{I}_1$, plus two sections. In terms of the divisors $E_i$, the $\mathrm{I}_{12}^*$ fibre is given by
\[(1;0,1,2,2,2,2,2,2,2,2,2,2,2,2,2,1,0;1)\]
and the two sections are $E_1$ and $E_{17}$.

In addition to the fibration structures, a generic $M$-polarised K3 surface also admits a special involution.

\begin{lemma}\label{lem:involution}
Let $X$ be a generic $M$-polarised K3 surface. Then the group of automorphisms which fix the $M$-polarisation has order $2$ and is generated by the fibrewise elliptic involution with respect to the standard fibration and its unique section $E_9$.
\end{lemma}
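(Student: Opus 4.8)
The plan is to show that the automorphism group fixing the $M$-polarisation is cyclic of order $2$ by combining a lattice-theoretic upper bound with an explicit geometric construction of a nontrivial element. Since $X$ is generic, the polarisation induces $\NS(X) \cong M$, and an automorphism fixing the polarisation acts trivially on $\NS(X)$. So the content of the lemma is twofold: (i) there exists a nontrivial automorphism acting trivially on $\NS(X)$, and (ii) any such automorphism has order dividing $2$ and equals the stated involution. I would treat these two directions separately.

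For the existence and identification of the involution, I would use the standard elliptic fibration $\pi\colon X \to \bP^1$ with section $E_9$. This fibration has Mordell--Weil rank zero: its singular fibres are two of type $\mathrm{II}^*$ and four of type $\mathrm{I}_1$, and the Shioda--Tate formula gives $\rank \NS(X) = 2 + \sum_v (m_v - 1) + \rank \mathrm{MW}$, where the $\mathrm{II}^*$ fibres each contribute $8$ and the $\mathrm{I}_1$ fibres contribute $0$; since $\rank \NS(X) = 18 = 2 + 16 + 0$, the Mordell--Weil group has rank $0$ (and in fact is trivial, as there is no torsion compatible with two $\mathrm{II}^*$ fibres). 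The fibrewise elliptic involution $\iota_0$ (negation in each smooth fibre with respect to the zero-section $E_9$) is then a well-defined automorphism of $X$. I would verify that $\iota_0$ acts trivially on $\NS(X)$: it fixes the class of a general fibre and the section $E_9$, and because the Mordell--Weil group is trivial it fixes every section-like and fibre-component class; concretely, $\iota_0$ preserves each reducible fibre and fixes each of its components (the involution acts on each fibre so as to fix the identity component meeting the zero-section and permutes the others trivially, since there is no translation part). Hence $\iota_0$ fixes a generating set of $\NS(X)$, so it fixes the polarisation and lies in the group in question; it is nontrivial and of order $2$.

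For the upper bound, I would argue that any automorphism $g$ fixing the polarisation acts as the identity on $\NS(X) \cong M$, hence preserves the elliptic fibration $\pi$ and fixes its section $E_9$ and the class of a fibre. Therefore $g$ descends to an automorphism of the base $\bP^1$ that fixes the images of the singular fibres; pinning down its action requires care but the two $\mathrm{II}^*$ fibres sit over distinct points and must be fixed individually (their classes differ in $\NS$), forcing $g$ to fix enough points of $\bP^1$ to act trivially there, so $g$ preserves every fibre. An automorphism preserving every fibre and fixing the zero-section restricts on a general fibre to a group automorphism of the elliptic curve fixing the origin; for a fibre with generic $j$-invariant the only such automorphisms are $\pm 1$, so $g$ restricts to $\pm \mathrm{id}$ fibrewise, giving $g \in \{\mathrm{id}, \iota_0\}$. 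This shows the group has order exactly $2$ and is generated by $\iota_0$.

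\textbf{The main obstacle} I anticipate is making the upper-bound argument fully rigorous, specifically the step that rules out a nontrivial automorphism acting trivially on $\NS(X)$ yet moving points within the fibration. The cleanest route is probably to invoke the general principle that an automorphism of a K3 surface acting trivially on the full cohomology $H^2(X,\bZ)$ is the identity, combined with the fact that for generic $X$ the transcendental lattice $T(X)$ carries a Hodge structure with no extra automorphisms beyond $\pm 1$; an automorphism fixing $\NS(X)$ acts on $T(X)$ as $\pm 1$ by the genericity of the period, and the $+1$ case forces the identity while the $-1$ case gives precisely $\iota_0$. I would verify that $\iota_0$ indeed acts as $-1$ on $T(X)$, which is the standard signature of a fibrewise elliptic involution, and that no other possibility survives genericity. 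The remaining verifications --- that $\iota_0$ fixes the listed generators of $M$ and that the Mordell--Weil group is trivial --- are routine given the Shioda--Tate computation above.
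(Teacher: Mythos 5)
Your primary argument has a genuine gap at exactly the point where the paper's proof does its real work. Knowing that $g$ fixes the two points of the base $\bP^1$ under the $\mathrm{II}^*$ fibres does not force $g$ to act trivially on $\bP^1$: an automorphism of $\bP^1$ fixing $0$ and $\infty$ can still be any $z \mapsto \lambda z$, so "fix enough points to act trivially there" is unjustified as written. You need a third constraint, and this is where genericity of $X$ must enter (the statement is false without it in an appropriate sense, since one must rule out extra automorphisms of order $3,4,6,\dots$). The paper closes this by invoking Inose's result that the four $\mathrm{I}_1$ fibres lie over $\alpha,\alpha^{-1},\beta,\beta^{-1}$, observing that $g$ must permute these, and noting that for generic $\alpha,\beta$ the only $z\mapsto\lambda z$ permuting this set is the identity; it also first cites results guaranteeing the group in question is finite cyclic of order dividing $12$. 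Your subsequent step (fibrewise automorphism fixing the zero-section on fibres of generic $j$-invariant must be $\pm 1$) matches the paper's final step and is fine, as is your existence/identification half via Shioda--Tate and triviality of the Mordell--Weil group.

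That said, the fallback you sketch in your "main obstacle" paragraph is a complete and genuinely different argument, and arguably cleaner: an automorphism acting trivially on $\NS(X)\cong M$ acts on the rank-$4$ transcendental lattice $T(X)\cong H\oplus H$ as a Hodge isometry, which for a generic period point is $\pm 1$; the $+1$ case acts trivially on a finite-index sublattice of $H^2(X,\bZ)$, hence on $H^2(X,\bZ)$, and is the identity by the strong Torelli theorem, while any two automorphisms acting as $-1$ on $T(X)$ differ by such an identity, so the group has order at most $2$; the fibrewise elliptic involution is non-symplectic with invariant lattice $\NS(X)$, hence realises the $-1$ case. This route replaces the paper's explicit analysis of the action on the base curve with lattice-theoretic genericity and Torelli; it is less elementary but avoids needing the location of the $\mathrm{I}_1$ fibres. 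If you intend the fibration argument to stand on its own, you must supply the missing third fixed point; otherwise, promote the transcendental-lattice argument from a remark to the actual proof.
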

\begin{proof}
Firstly, we note that the fibrewise involution with respect to the standard fibration fixes the polarisation, so every $M$-polarised K3 surface $X$ admits such an automorphism. We will prove that, if $X$ is generic, then there are no others.

By \cite[Corollary 3.3]{fagkk3s} the group of automorphisms of $X$ that fix the polarisation is cyclic, and by \cite[Theorem 4.2]{aak3satpg} its order divides $12$. Let $\sigma$ be a generator. As $\sigma$ fixes the $M$-polarisation, it maps the curve $E_9$ to itself, and fixes its points of intersection with $E_8$ and $E_{10}$. 

Moreover, $\sigma$ preserves the class of a fibre in the standard fibration, so must act to permute the fibres of this fibration; in particular, it must permute the $\mathrm{I}_1$ fibres. Identifying $E_9$ with $\bP^1$ and setting the points of intersection with $E_8$ and $E_{10}$ to be $0$ and $\infty$, \cite[Lemma 1]{desk3sni} shows that the $\mathrm{I}_1$ fibres occur over $\alpha$, $\alpha^{-1}$, $\beta$, and $\beta^{-1}$, for some $\alpha, \beta \in \bC$. It follows that $\sigma$ acts on $E_9$ as an automorphism of $\bP^1$ which fixes $0$ and $\infty$ and permutes these four points. But if $\alpha$ and $\beta$ are generic the only such automorphism is the identity. So $\sigma$ acts as the identity on $E_9$ and therefore acts fibrewise on the standard fibration.

Finally, as the fibres of the standard fibration do not have $j$-invariant identically $0$ or $1$, the only nontrivial fibrewise automorphism fixing the section $E_9$ is the elliptic involution, so $\sigma$ must be this involution.
\end{proof}

\begin{remark} The fibrewise elliptic involution with respect to the alternate fibration and either of its two sections also fixes the $M$-polarisation. It follows from Lemma \ref{lem:involution} that the fibrewise elliptic involutions with respect to the two fibrations are in fact the same involution. Consequently, for simplicity, we will henceforth refer to this involution simply as the \emph{fibrewise elliptic involution}.
\end{remark}

Using these ideas we define two additional curves on $X$. Let $S$ denote a smooth fibre of the alternate fibration on $X$ and let $T$ denote the $2$-torsion locus of the standard fibration with respect to the unique section $E_9$. As $X$ is very general, $T$ is a smooth curve, which forms a trisection of the standard fibration and a bisection of the alternate fibration. By construction $T$ is part of the fixed locus of the fibrewise elliptic involution; this fixed locus consists of precisely those curves from the set
\[\{T, E_1,E_3,E_5,E_7,E_9,E_{11},E_{13},E_{15},E_{17}\}.\]  
The curves $S$ and $T$ satisfy $S^2 = 0$, $T^2 = 2$, and $S.T = 2$. We can add them into our dual graph as in Figure \ref{fig:augmenteddualgraph}.

\begin{figure}
\begin{tikzpicture}[scale=0.7,thick]
\draw (2,0)--(2,1);
\draw (0,1)--(16,1);
\draw(14,0)--(14,1);
\draw (2,0)--(14,0);
\draw (0,1)--(0,-1)--(16,-1)--(16,1);
\draw (7.95,0)--(7.95,-1);
\draw (8.05,0)--(8.05,-1);

\node[above] at (0,1) {$E_1$};
\node[above] at (1,1) {$E_2$};
\node[above] at (2,1) {$E_3$};
\node[above] at (3,1) {$E_4$};
\node[above] at (4,1) {$E_5$};
\node[above] at (5,1) {$E_6$};
\node[above] at (6,1) {$E_7$};
\node[above] at (7,1) {$E_8$};
\node[above] at (8,1) {$E_9$};
\node[above] at (9,1) {$E_{10}$};
\node[above] at (10,1) {$E_{11}$};
\node[above] at (11,1) {$E_{12}$};
\node[above] at (12,1) {$E_{13}$};
\node[above] at (13,1) {$E_{14}$};
\node[above] at (14,1) {$E_{15}$};
\node[above] at (15,1) {$E_{16}$};
\node[above] at (16,1) {$E_{17}$};
\node[above] at (8,0) {$T$};
\node[below] at (8,-1) {$S$};

\node[left] at (2,0) {$E_0$};
\node[right] at (14,0) {$E_{18}$};

\filldraw[black]
(2,0) circle (3pt)
(14,0) circle (3pt)
(1,1) circle (3pt)
(0,1) circle (3pt)
(2,1) circle (3pt)
(3,1) circle (3pt)
(4,1) circle (3pt)
(5,1) circle (3pt)
(6,1) circle (3pt)
(7,1) circle (3pt)
(8,1) circle (3pt)
(9,1) circle (3pt)
(10,1) circle (3pt)
(11,1) circle (3pt)
(12,1) circle (3pt)
(13,1) circle (3pt)
(14,1) circle (3pt)
(15,1) circle (3pt)
(16,1) circle (3pt)
(8,0) circle (3pt)
(8,-1) circle (3pt)
;

\end{tikzpicture}
\caption{Dual graph of the $(-2)$-curves on $X$ along with the curves $S$ and $T$.}
\label{fig:augmenteddualgraph}
\end{figure}

We can augment our notation for divisors on $X$ with the classes of $S$ and $T$ as follows.

\begin{notation} \label{not:augmenteddivisors} We will denote the $\bQ$-divisor $\sum_{i=0}^{18}a_iE_i + sS + tT$, for $a_i,s,t \in \bQ$, by the notation
\[(a_0;a_1,a_2,a_3,a_4,a_5,a_6,a_7,a_8,a_9,a_{10},a_{11},a_{12},a_{13},a_{14},a_{15},a_{16},a_{17};a_{18}\mid s,t).\]
As divisors on smooth surfaces correspond to sections of line bundles, when there is no potential for confusion we will use the same notation (with $\bZ$-coefficients) for sections of appropriate line bundles.
\end{notation}

The addition of $S$ and $T$ gives rise to two extra relations between our classes:
\begin{align}
\label{eq:Srelation}  (1;0,1,2,2,2,2,2,2,2,2,2,2,2,2,2,1,0;1\mid -1,0) & \sim 0,\\
\label{eq:Trelation} (1;1,2,3,3,3,3,3,3,3,3,3,3,3,3,3,2,1;1\mid 0,-1) & \sim 0.
\end{align}

\subsection{Moduli of $M$-polarised K3 surfaces}\label{sec:moduli}

Based on earlier work of Inose \cite{desk3sni}, Clingher, Doran, Lewis, and Whitcher \cite{milpk3s,nfk3smmp} define a projective model for $M$-polarised K3 surfaces by resolving the $E_6$ and $A_{11}$ singularities in the singular quartic surfaces
\begin{equation}\label{eq:CDform}\{y^2zw - 4x^3z+3axzw^2+bzw^3 - \tfrac{1}{2}(dz^2w^2+w^4) = 0\} \subset \bP^3[w,x,y,z],\end{equation}
for parameters $a,b,d \in \bC$ with $d \neq 0$. The surfaces in this family are smooth away from the locus
\[\{a^6 +b^4+d^2-2a^3b^2-2a^3d-2b^2d = 0\}.\]
Along this locus the surfaces acquire an additional $A_1$ singularity; if one resolves this then the lattice polarisation jumps to $H \oplus E_8 \oplus E_8 \oplus A_1$.

\cite[Theorem 3.2]{nfk3smmp} shows that two such quartics, with defining parameters $(a_1,b_1,d_1)$ and $(a_2,b_2,d_2)$, determine isomorphic $M$-polarised K3 surfaces if and only if 
\[(a_1,b_1,d_1) = (\lambda^2 a_2, \lambda^3 b_2, \lambda^6 d_2),\] 
from which it follows that the affine variety
\[\{(a,b,d) \in \bW\bP(2,3,6) : d \neq 0\}\]
forms a coarse moduli space for $M$-polarised K3 surfaces. This coarse moduli space can be compactified to the weighted projective space $\bW\bP(2,3,6)$.

The Baily-Borel compactification for the moduli space of $M$-polarised K3 surfaces contains a unique $1$-cusp, which can be identified in the compactification above as the locus
\[\{(a,b,0) \in \bW\bP(2,3,6) : a^3 \neq b^2\},\]
and a unique $0$-cusp, which can be identified as the point $(a,b,d)=(1,1,0)$.

\section{Tyurin degenerations}\label{sec:Tyurin}

\subsection{Background}

In this subsection we summarise some basic information about degenerations of K3 surfaces; more details may be found in \cite{bgd1,cmsak3s,cmk3s}. 

By a \emph{degeneration} of K3 surfaces, we mean a proper, flat, surjective morphism $\pi \colon \calX \to \Delta$ from a K\"{a}hler manifold to the open complex unit disc, such that the restriction to the punctured disc is a smooth morphism with all fibres K3 surfaces. We say that a degeneration of K3 surfaces is \emph{projective} if $\pi$ is a projective morphism. We denote the fibre over a point $p \in \Delta$ by $X_p$. A degeneration of K3 surfaces is called \emph{semistable} if the central fibre $X_0$ is a simple normal crossings divisor, and is called a \emph{Kulikov model} if it is semistable and $\omega_{\calX} \cong \calO_{\calX}$.

Kulikov degenerations can be classified into Types I, II, and III. Degenerations of Type II (resp. III) occur over $1$-cusps (resp. $0$-cusps) in the Baily-Borel compactification of the appropriate moduli space of K3 surfaces. In this paper we are interested in a special kind of Type II degeneration, known as a Tyurin degeneration.

\begin{definition}
A semistable degeneration of K3 surfaces is called a \emph{Tyurin degeneration} if its central fibre $X_0$ consists of two rational surfaces $Y_1$ and $Y_2$ glued along a smooth anticanonical elliptic curve $D$.
\end{definition}

By adjunction, Tyurin degenerations are automatically Kulikov models. However, they are also not unique: given a smooth family of K3 surfaces over the punctured disc, there may be multiple different ways to complete it to a Tyurin degeneration. Such non-uniqueness is common for Tyurin degenerations in the analytic category, where one can always analytically flop $(-1)$-curves from $Y_1$ to $Y_2$ (or vice versa) to obtain different Tyurin degenerations, but is less well-understood in the projective setting.

In this paper we are interested in studying projective Tyurin degenerations. We wish to categorise the possible projective Tyurin degenerations that can occur over the $1$-cusp in the moduli space of $M$-polarised K3 surfaces.

In order to do this, we need to understand how to place a lattice polarisation on a degeneration. The following definition is adapted from \cite[Definition 2.1]{flpk3sm}.

\begin{definition} \label{def:Mpoldegeneration}  A degeneration of K3 surfaces $\pi \colon \calX \to \Delta$ is called \emph{$M$-polarised} if there is a trivial local subsystem $\calM$ of $R^2\pi_*\bZ$ over the punctured disc $\Delta^*$, with fibre $M$, so that for each $p \in \Delta^*$ the fibre $\calM_p \subset H^2(X_p,\bZ)$ of $\calM$ over $p$ defines an $M$-polarisation on $X_p$.
\end{definition}

\begin{remark} Triviality of the local subsystem $\calM$ implies that the very irrational vector $h \in M \otimes \bR$ is monodromy invariant. The $M$-polarisation condition then includes the assumption that $h \in \NS(X_p) \otimes \bR$ must be ample for all $p \in \Delta^*$.
\end{remark}

\subsection{Analytic Tyurin degenerations}\label{sec:analytictyurin}

We begin our study by classifying $M$-polarised Tyurin degenerations without assuming that the morphism $\pi$ is projective. In order to do this, we start by studying the rational surfaces that will appear in such degenerations.

\begin{definition} \label{def:Vd} Let $V_d$, with $d \geq 0$, denote the rational surface obtained from $\bP^2$ by blowing up $d$ times at an inflection point $p$ of a smooth cubic curve $C$. Let $D$ denote the strict transform of $C$. Let $F_i$, for $1 \leq i \leq d$, denote the strict transform of the exceptional curve arising from the $i$th blow-up, and let $F_0$ denote the strict transform of the inflectional tangent line at $p$. Note that $F_i^2 = -2$ for $1 \leq i \leq d-1$ and $F_d^2 = -1$. For $d \geq 3$ we also have $F_0^2 = -2$; in this case the dual graph of these curves is shown in Figure \ref{fig:blowupdualgraph}.

Let $\overline{V}_1$ denote the Hirzebruch surface $\bF_2$ and let $D \subset \overline{V}_1$ be a smooth anticanonical curve. Let $F_1$ denote the unique $(-2)$-curve in $\overline{V}_1$ and let $F_2$ denote a fibre of the ruling on $\bF_2$ which lies tangent to $D$ at a point $p$. Note that $F_2^2 = 0$.
\end{definition}

\begin{figure}
\begin{tikzpicture}[scale=0.7,thick]
\draw (2,0)--(2,1);
\draw (0,1)--(3,1);
\draw[dashed] (3,1)--(4.5,1);
\draw (4.5,1)--(6.5,1);

\node[above] at (0,1) {$F_1$};
\node[above] at (1,1) {$F_2$};
\node[above] at (2,1) {$F_3$};
\node[above] at (3,1) {$F_4$};
\node[above] at (4.5,1) {$F_{d-1}$};
\node[above] at (5.5,1) {$F_d$};
\node[above] at (6.5,1) {$D$};

\node[left] at (2,0) {$F_0$};

\filldraw[black]
(2,0) circle (3pt)
(1,1) circle (3pt)
(0,1) circle (3pt)
(2,1) circle (3pt)
(3,1) circle (3pt)
(4.5,1) circle (3pt)
(5.5,1) circle (3pt)
(6.5,1) circle (3pt)
;

\end{tikzpicture}
\caption{Dual graph of curves on $V_d$ for $d \geq 3$.}
\label{fig:blowupdualgraph}
\end{figure}

Note that for each surface $V_d$ and $\overline{V}_d$, the curve $D$ is smooth and anticanonical with $D^2 = 9-d$.  In a mild abuse of notation, we let $p \in D$ denote the strict transform of the point $p \in C$.

In $V_1 \cong \bF_1$ and $\overline{V}_1\cong \bF_2$ the only irreducible curve of negative self-intersection is $F_1$, and in $V_2$ the only such curves are $F_0$, $F_1$, and $F_2$. For $d \geq 3$ we have the following result.

\begin{lemma} \label{lem:Vdcurves} Let $d \geq 3$. Then $F_d$ is the only rational $(-1)$-curve on $V_d$ and $\{F_0,\ldots,F_{d-1}\}$ are the only rational $(-2)$-curves on $V_d$.
\end{lemma}

\begin{proof} This proof is based on ideas of Looijenga \cite{rsac} and Harbourne \cite{bup2bd}; we refer the reader to those papers for more details. Suppose that $V$ is a surface obtained by blowing-up $\bP^2$ a total of $d$ times. Then $V$ contains an \emph{exceptional configuration}: an ordered set $\{\calE_0,\calE_1,\ldots,\calE_d\}$, where $\calE_0$ is the total transform of a line in $\bP^2$ and $\calE_i$, for $1 \leq i \leq d$, is the total transform of the exceptional $(-1)$-curve arising from the $i$th blow-up. Exceptional configurations on $V$ are in bijective correspondence with birational morphisms $V \to \bP^2$.

Associated to an exceptional configuration is a set of simple roots $r_0 = \calE_0 - \calE_1 - \calE_2 - \calE_3$ and $r_i = \calE_i - \calE_{i+1}$ for $1 \leq i \leq d-1$. Each of these roots defines a fundamental reflection $s_i(x) = x + \langle x, r_i \rangle r_i$, for $x \in \NS(V) \cong \Pic(V)$, and the fundamental reflections generate a Weyl group $W$ which acts by isometries on $\NS(S)$. \cite[Theorem 0.1]{bup2bd} then shows that all exceptional configurations on $V$ are $W$-translates of one another.

We apply this to our setting as follows. Let $V = V_d$ for some $d \geq 3$ and let 
\begin{align*}
\calE_0 &= [F_0] + [F_1] + 2[F_2] + 3[F_3] + 3[F_4] + \cdots + 3[F_d],\\ 
\calE_i &= [F_d] + [F_{d-1}] + \cdots + [F_i] \quad \text{for each } 1 \leq i \leq d,
\end{align*}
where $[F_i] \in \NS(V_d)$ denotes the class of the curve $F_i$. Then $\calE := \{\calE_0,\calE_1,\ldots,\calE_d\}$ is an exceptional configuration for $V_d$ and the simple roots $r_i = [F_i]$ for all $0 \leq i \leq d-1$. To prove the lemma, it suffices to show that no other exceptional configurations exist on $V_d$. To do this, we show that no nontrivial $W$-translate of $\calE$ can give an exceptional configuration.

Define $C \subset \NS(V_d) \otimes \bR$ by 
\[C := \{x \in \NS(V_d) \otimes \bR : \langle x, r_i \rangle > 0 \text{ for all } 0 \leq i \leq d-1\}.\]
Then $C$ is a fundamental chamber for the $W$-action. Let $\overline{C}$ denote its closure.

Now let $w \in W$ be any element. If $w(\calE_0) \notin \overline{C}$ then $\langle w(\calE_0),r_i \rangle < 0$ for some $r_i$ and, since the $r_i = [F_i]$ are classes of effective curves, we find that $w(\calE_0)$ is not nef. Then by \cite[Theorem  1.1]{bup2bd}, $w(\calE)$ cannot be an exceptional configuration.

We may therefore assume that $w(\calE_0) \in \overline{C}$. But then, by \cite[(3.2)]{rsac}, we have that $w(\calE_0) = \calE_0$ and $w$ is in the subgroup of $W$ generated by the $s_i$ with $1 \leq i \leq d-1$. An easy computation shows that if $1 \leq i \leq d-1$, then the reflection $s_i$ exchanges $\calE_i$ and $\calE_{i+1}$ and fixes the other $\calE_j$. So $w$ must fix $\calE_0$ and permute the $\calE_i$ for $1 \leq i \leq d$.

If $w$ is nontrivial, then there must exist $i > j > 0$ such that $w(\calE_i) = \calE_{i'}$ and $w(\calE_j) = \calE_{j'}$ with $j'>i'>0$. Then
\[ w(\calE_i) - w(\calE_j) = \calE_{i'} - \calE_{j'} = r_{i'} + r_{i'+1} + \cdots + r_{j'-1}\]
and the right-hand side is the class of the effective divisor $F_{i'}+ \cdots + F_{j'-1}$. But then, by \cite[Theorem  1.1]{bup2bd}, $w(\calE)$ cannot be an exceptional configuration.

Consequently, $w(\calE)$ is only an exceptional configuration if $w$ is trivial. Therefore, by \cite[Theorem 0.1]{bup2bd}, $\calE$ is the only exceptional configuration on $V_d$.
\end{proof}

The surfaces $V_d$ and $\overline{V}_d$ will be the components of the central fibres of our Tyurin degenerations. The following definition describes these central fibres.

\begin{definition} The \emph{stable surface $(V_d,V_{18-d})$} (resp. $(\overline{V}_1,V_{17})$), for $d \leq 9$, is the normal surface obtained by gluing the surfaces $V_d$ and $V_{18-d}$ (resp. $\overline{V}_1$ and $V_{17}$) from Definition \ref{def:Vd} via an isomorphism of the anticanonical curves $D$ that identifies the points $p$. Let $F_i$ denote the special curves in $V_d$ (resp. $\overline{V}_1$) from Definition \ref{def:Vd} and let $F_i'$ denote the special curves in $V_{18-d}$.
\end{definition}

The following is the main theorem of this section, which provides a description of the possible $M$-polarised Tyurin degenerations. Note that this result does not assume that $\pi$ is projective.

\begin{theorem} \label{thm:analyticclassification} Let  $\pi \colon \calX \to \Delta$ be an $M$-polarised Tyurin degeneration of K3 surfaces. Then the central fibre $X_0$ is a stable surface $(V_d,V_{18-d})$ or $(\overline{V}_1,V_{17})$. Moreover, up to labelling, the generators $E_i$ of the lattice $M$ degenerate to curves on $X_0$ as follows.
\begin{itemize}
\item If $3 \leq d \leq 9$, the curves $E_i$ degenerate to $F_i$ if $i < d$, $F_{18-i}'$ if $i > d$, and $(F_d+F_{18-d}')$ if $i=d$; the case $d=6$ is illustrated by Figure \ref{fig:tyurindualgraph}
\item In $(V_2,V_{16})$, the curve $E_0$ degenerates to $(F_0 + F_{16}')$, the curve $E_1$ degenerates to $F_1$, the curve $E_2$ degenerates to $(F_2 + F_{16}')$, and the remaining $E_i$ degenerate to $F_{18-i}'$.
\item In $(V_1,V_{17})$, the curve $E_0$ degenerates to $(F_0 + 2F_{17}'+F_{16}')$, the curve $E_1$ degenerates to $(F_1+F_{17}')$, and the remaining $E_i$ degenerate to $F_{18-i}'$.
\item In $(V_0,V_{18})$, the curve $E_0$ degenerates to $(F_0 + 3F_{18}' + 2F_{17}'+F_{16}')$ and the remaining $E_i$ degenerate to $F_{18-i}'$.
\item In $(\overline{V}_1,V_{17})$, the curve $E_0$ degenerates to $F_{16}'$, the curve $E_1$ degenerates to $F_1$, the curve $E_2$ degenerates to $(F_2 + 2F_{17}' + F_{16}')$, and the remaining $E_i$ degenerate to $F_{18-i}'$.
\end{itemize}
\end{theorem}

\begin{figure}
\begin{tikzpicture}[scale=0.7,thick]
\draw (2,2)--(2,3);
\draw (0,3)--(5,3);
\draw (5,1)--(16,1);
\draw (14,0)--(14,1);
\draw[dashed] (5,3)--(5,1);

\node[above] at (0,3) {$F_1$};
\node[above] at (1,3) {$F_2$};
\node[above] at (2,3) {$F_3$};
\node[above] at (3,3) {$F_4$};
\node[above] at (4,3) {$F_5$};
\node[above] at (5,3) {$F_6$};
\node[above left] at (5,1) {$F_{12}'$};
\node[above] at (6,1) {$F_{11}'$};
\node[above] at (7,1) {$F_{10}'$};
\node[above] at (8,1) {$F_{9}'$};
\node[above] at (9,1) {$F_8'$};
\node[above] at (10,1) {$F_7'$};
\node[above] at (11,1) {$F_6'$};
\node[above] at (12,1) {$F_5'$};
\node[above] at (13,1) {$F_4'$};
\node[above] at (14,1) {$F_3'$};
\node[above] at (15,1) {$F_2'$};
\node[above] at (16,1) {$F_1'$};

\node[left] at (2,2) {$F_0$};
\node[right] at (14,0) {$F_0'$};

\filldraw[black]
(2,2) circle (3pt)
(14,0) circle (3pt)
(1,3) circle (3pt)
(0,3) circle (3pt)
(2,3) circle (3pt)
(3,3) circle (3pt)
(4,3) circle (3pt)
(5,3) circle (3pt)
(5,1) circle (3pt)
(6,1) circle (3pt)
(7,1) circle (3pt)
(8,1) circle (3pt)
(9,1) circle (3pt)
(10,1) circle (3pt)
(11,1) circle (3pt)
(12,1) circle (3pt)
(13,1) circle (3pt)
(14,1) circle (3pt)
(15,1) circle (3pt)
(16,1) circle (3pt)
;

\end{tikzpicture}
\caption{Limits of curves $E_i$ on $(V_6,V_{12})$. Curves on $V_6$ are to the left of the dashed edge and curves on $V_{12}$ are to the right. The dashed edge denotes the point $p \in D$. The curves $E_0,\ldots,E_5$ degenerate to $F_0,\ldots,F_5$, the curves $E_{7}\ldots,E_{18}$ degenerate to $F_{11}',\ldots,F_{0}'$, and the curve $E_6$ degenerates to $(F_6+F_{12}')$.}
\label{fig:tyurindualgraph}
\end{figure}

\begin{proof} We begin by recalling some of the lattice theory of Tyurin degenerations; see \cite[Section 3]{npgttk3s} and \cite[Section 2.2]{cmsak3s} for details. Given a Tyurin degeneration $\pi\colon \calX \to \Delta$ of K3 surfaces with central fibre $X_0 = Y_1 \cup_D Y_2$, the weight filtration in the limiting mixed Hodge structure $H^2_{\lim}(X_p)$ is defined over $\bZ$ and is given by
\[0 \subset I \subset I^{\perp} \subset H^2(X_p,\bZ),\]
where $I \subset H^2(X_p,\bZ)$ is a rank $2$ isotropic sublattice, the choice of which is unique up to isometries of $H^2(X_p,\bZ) \cong H\oplus H \oplus H \oplus E_8 \oplus E_8$. From the Clemens-Schmid sequence we obtain an exact sequence of lattices (see \cite[Lemma 3.5]{npgttk3s})
\[0 \longrightarrow \bZ \xi \longrightarrow \xi^{\perp} \longrightarrow I^{\perp}/I \longrightarrow 0,\]
where $\xi \in H^2(Y_1,\bZ) \oplus H^2(Y_2,\bZ)$ is the class of $(\calO_{Y_1}(D),\calO_{Y_2}(-D))$. As $I$ is unique up to isometry, we can take $I$ to be generated by primitive isotropic classes in the first two copies of $H$ in $H\oplus H \oplus H \oplus E_8 \oplus E_8$, from which it follows that $I^{\perp}/I \cong M$.

There is a map $\widetilde{\psi} \in \Hom(\xi^{\perp},D)$ defined by
\[\widetilde{\psi}(\calL_1,\calL_2) = \calL_1|_{D} \otimes \calL_2^{-1}|_D \in \Pic^0(D) \cong D,\]
where $(\calL_1,\calL_2) \in \xi^{\perp} \subset H^2(Y_1,\bZ) \oplus H^2(Y_2,\bZ) \cong \Pic(Y_1) \oplus \Pic(Y_2)$, and $\widetilde{\psi}$ necessarily satisfies $\widetilde{\psi}(\xi) = 1$. This map therefore descends to $\psi \in \Hom(I^{\perp}/I, D)$; this $\psi$ is called the \emph{limiting period point} of the Tyurin degeneration.

Alexeev and Engel \cite[Section 4]{cmk3s} have shown how to extend this to the lattice polarised setting. Given an $M$-polarised Tyurin degeneration of K3 surfaces, one obtains a lattice embedding $M \hookrightarrow I^{\perp}/I$ and, moreover, the limiting period point $\psi$ must satisfy $\psi(\alpha) = 1$ for all $\alpha \in M$. Since $I^{\perp}/I \cong M$, this embedding is in fact an isometry, so the limiting period point must satisfy $\psi(\alpha) = 1$ for all $\alpha \in I^{\perp}/I$. This is equivalent to  $\widetilde{\psi}(\alpha) = 1$ for all $\alpha \in \xi^{\perp}$.

By the classification of rational surfaces with smooth anticanonical divisor, there are two possibilities for the surfaces $Y_i$. For ease of notation we set $i=1$ in this analysis; the argument for $i=2$ is analogous.
\begin{itemize}
\item $Y_1$ is a blow-up of $\bP^2$ in $d \geq 0$ points lying on a smooth cubic curve $C$, and $D$ is the strict transform of $C$. In this case, let $L \subset Y_1$ denote the pull-back of a line from $\bP^2$ and let $E_1, E_2 \subset Y_1$ be any two $(-1)$-curves. Then $\widetilde{\psi}(\calO_{Y_1}(E_1-E_2),\calO_{Y_2}) = \widetilde{\psi}(\calO_{Y_1}(L-3E_1),\calO_{Y_2}) = 1$, so $E_1$ and $E_2$ intersect $D$ in the same point $p$, which must be an inflection point of the cubic $C$. It follows that $Y_1 \cong V_d$.
\item $Y_1$ is either $\bP^1 \times \bP^1$ or $\bF_2$ and $D$ is a smooth anticanonical curve with $D^2 = 8$.
\end{itemize}
Moreover, by semistability we must have $(D|_{Y_1})^2 = -(D|_{Y_2})^2$. Relabelling if necessary, we may assume that $(D|_{Y_2})^2 \leq 0$. By the analysis above, this implies that $Y_2 \cong V_{18-d}$ for $0 \leq d \leq 9$. Let $E \subset Y_2$ be the $(-1)$-curve (which is unique by Lemma \ref{lem:Vdcurves}). Then:
\begin{itemize}
\item If $Y_1 \cong V_d$ for $d \geq 1$, let $F \subset Y_1$ be the unique $(-1)$-curve.  Then $\widetilde{\psi}(\calO_{Y_1}(F),\calO_{Y_2}(E)) = 1$, so $E$ and $F$ necessarily intersect the same point $p \in D$.
\item If $Y_1 \cong V_0 \cong \bP^2$, let $L \subset Y_1$ be a line. Then $\widetilde{\psi}(\calO_{Y_1}(L),\calO_{Y_2}(3E)) = 1$, so $p = E \cap D$ is necessarily identified with an inflection point on $D \subset \bP^2$.
\item If $Y_1 \cong \bF_2$, let $F \subset \bF_2$ be a fibre of the ruling. Then $\widetilde{\psi}(\calO_{Y_1}(F),\calO_{Y_2}(2E)) = 1$, so $p = E \cap D$ is necessarily identified with a point where $D$ lies tangent to a fibre of the ruling.
\item If $Y_1 \cong \bP^1 \times \bP^1$, let $F_1,F_2 \subset \bP^1 \times \bP^1$ be fibres of the two rulings. Then $\widetilde{\psi}(\calO_{Y_1}(F_i),\calO_{Y_2}(2E)) = 1$ for each $i \in \{1,2\}$, so $p = E \cap D$ is necessarily identified with a point where $D$ lies tangent to fibres of both rulings. There are no such points for $D$ smooth and anticanonical, so this case is impossible. 
\end{itemize}
It follows that $X_0$ must be a stable surface $(V_d,V_{18-d})$ or $(\overline{V}_1,V_{17})$. Finally, the statement about the limits of the classes $E_i$ follows from the fact that these limits must be effective divisors of self-intersection $-2$, along with the description of negative curves in the surfaces $V_d$ from Lemma \ref{lem:Vdcurves}.
\end{proof}

\begin{remark} Note that all of the degenerations with central fibre $(V_d,V_{18-d})$, with $d \geq 2$, are related by a single chain of flops. In $V_2$ there is a choice of two $(-1)$-curves, $F_0$ and $F_2$, that one may flop to proceed further: flopping $F_0$ gives $(\overline{V}_1,V_{17})$, at which point the chain terminates, and flopping $F_2$ gives $({V}_1,V_{17})$ followed by $({V}_0,V_{18})$. It follows from Lemma \ref{lem:Vdcurves} that no other flops are possible.
\end{remark}

In the remainder of this paper, we will show that all of the central fibres described in Theorem \ref{thm:analyticclassification} can be realised by projective $M$-polarised Tyurin degenerations. To do this we begin by constructing a \emph{projective model} for $M$-polarised K3 surfaces, then perform an appropriate degeneration in the ambient projective space. We use two approaches to construct these projective models, which we will now illustrate by giving two derivations of the projective normal form \eqref{eq:CDform}.

\subsection{Linear system approach}\label{sec:linearsystem}

As usual, we let $X$ denote a very general $M$-polarised K3 surface. In this approach we begin with a nef and big $\bQ$-divisor $D$ on $X$, then exhibit sections that generate the canonical ring 
\[R(X,D) = \bigoplus_{n \geq 0} H^0(X, \calO_X(\lfloor nD \rfloor)).\]
It follows from \cite[Corollary 5]{fk3s} that if $kD$ is a Cartier divisor, then $R(X,D)$ is generated in degrees $\leq 3k$ and we have a birational morphism $X \to \Proj R(X,D)$ to the canonical model of $(X,D)$. In the cases studied, a choice of homogeneous generators for $\Proj R(X,D)$ will realize  $\Proj R(X,D)$ as a variety in weighted projective space. We then explicitly compute the image of $X$ inside this weighted projective space to obtain our projective model.

For example, using Notation \ref{not:augmenteddivisors}, we could consider the divisor 
\[D=(4;3,6,9,8,7,6,5,4,3,2,1,0,0,0,0,0,0;0\mid 0,0).\]
Then $D$ is a nef and big Cartier divisor with $D^2 = 4$, and by Riemann-Roch we have $h^0(X, \calO_X(D)) = 4$. Using relations \eqref{eq:E8relation}, \eqref{eq:Srelation}, and \eqref{eq:Trelation}, we find three independent divisors that are linearly equivalent to $D$:
\begin{align*}
&(1;1,2,3,3,3,3,3,3,3,3,3,3,4,5,6,4,2;3\mid 0,0),\\
&(0;1,1,1,1,1,1,1,1,1,1,1,1,2,3,4,3,2;2\mid 1,0),\\
&(0;0,0,0,0,0,0,0,0,0,0,0,0,1,2,3,2,1;2\mid 0,1).
\end{align*}
These four divisors form a base-point free linear system. Denote the corresponding sections of $\calO_X(D)$ by $z,w,x,y$, respectively. Then $w,x,y,z$ generate the graded ring $R(X,D)$, so we obtain a birational morphism $\varphi\colon X \to \bP^3[w,x,y,z]$ whose image is a quartic hypersurface.

From the intersection properties of the divisors above, one can see that $\varphi$ contracts $E_1,\ldots,E_{11}$ to a singularity of type $A_{11}$ and $E_{13},\ldots,E_{18}$ to a singularity of type $E_6$. The curve $E_0$ is mapped to the line $z=w=0$ and the curve $E_{12}$ is mapped to the line $x=w=0$. 

Let $f(w,x,y,z)$ be the defining equation of $\varphi(X)$. From the description of the divisors above we have the following.
\begin{itemize}
\item $f(0,x,y,z)$ is a nonzero multiple of $x^3z$.
\item $f(w,0,y,z)$ may be written as $wf_3(w,y,z)$, for some cubic $f_3$ defining the image of $S$ in the hyperplane $\{x=0\}$. Moreover, the cubic $f_3$ intersects the line $w=0$ in the two points $w=z=0$ (with multiplicity $1$) and $w=y=0$ (with multiplicity $2$), so is a nonzero multiple of $zy^2+w(\text{conic})$.
\item $f(w,x,0,z)$ is a quartic curve defining the image of $T$ in the hyperplane $\{y=0\}$.
\item $f(w,x,y,0)$ is a nonzero multiple of $w^4$, this multiple can be set to $1$ by global rescaling.
\end{itemize}
This gives a candidate form for $f_4(w,x,y,z)$:
\[w^4 +a_1x^3z + wz(a_2w^2 + a_3wx + a_4wy + a_5wz + a_6x^2 + a_7xy + a_8xz + a_{9}y^2),\]
where $a_i \in \bC$ and $a_1,a_{9}$ are nonzero.

We can place further restrictions on this form. The quotient of $X$ by the fibrewise elliptic involution is a surface containing a configuration of curves as shown in Figure \ref{fig:quotientdivisors}, where the numbers on the vertices denote self-intersection numbers. Note that the ramification curve $T$ is isomorphic to its image under this quotient. The birational map contracting the curves $E_{13},\ldots,E_{18}$ descends to this quotient, where it creates a singularity of type $A_2$ in the branch curve. We thus see that the quartic curve $f(w,x,0,z)$ should have a singularity of type $A_2$ at the point $x=w=0$. This forces $a_8 = 0$ in the form above.

\begin{figure}
\begin{tikzpicture}[scale=0.7,thick]
\draw (2,0)--(2,1);
\draw (0,1)--(16,1);
\draw(14,0)--(14,1);
\draw (2,0)--(14,0);
\draw (0,1)--(0,-1)--(16,-1)--(16,1);
\draw (7.95,0)--(7.95,-1);
\draw (8.05,0)--(8.05,-1);

\node[above] at (0,1) {$-4$};
\node[above] at (1,1) {$-1$};
\node[above] at (2,1) {$-4$};
\node[above] at (3,1) {$-1$};
\node[above] at (4,1) {$-4$};
\node[above] at (5,1) {$-1$};
\node[above] at (6,1) {$-4$};
\node[above] at (7,1) {$-1$};
\node[above] at (8,1) {$-4$};
\node[above] at (9,1) {$-1$};
\node[above] at (10,1) {$-4$};
\node[above] at (11,1) {$-1$};
\node[above] at (12,1) {$-4$};
\node[above] at (13,1) {$-1$};
\node[above] at (14,1) {$-4$};
\node[above] at (15,1) {$-1$};
\node[above] at (16,1) {$-4$};
\node[above] at (8,0) {$4$};
\node[below] at (8,-1) {$0$};

\node[left] at (2,0) {$-1$};
\node[right] at (14,0) {$-1$};

\filldraw[black]
(2,0) circle (3pt)
(14,0) circle (3pt)
(1,1) circle (3pt)
(0,1) circle (3pt)
(2,1) circle (3pt)
(3,1) circle (3pt)
(4,1) circle (3pt)
(5,1) circle (3pt)
(6,1) circle (3pt)
(7,1) circle (3pt)
(8,1) circle (3pt)
(9,1) circle (3pt)
(10,1) circle (3pt)
(11,1) circle (3pt)
(12,1) circle (3pt)
(13,1) circle (3pt)
(14,1) circle (3pt)
(15,1) circle (3pt)
(16,1) circle (3pt)
(8,0) circle (3pt)
(8,-1) circle (3pt)
;

\end{tikzpicture}
\caption{Dual graph of curves on the quotient by the fibrewise elliptic involution.}
\label{fig:quotientdivisors}
\end{figure}

As $a_{10} \neq 0$, completing the square in $y$ allows us to eliminate the $wxyz$ and $w^2yz$ terms, and as $a_1 \neq 0$ we can complete the cube in $x$ to eliminate the $x^2wz$ term. We are left with the form
\[f_4(w,x,y,z) = w^4 +b_1x^3z + b_2w^3 z+ b_3w^2xz + b_4w^2z^2 + b_5wy^2z,\]
for some $b_i \in \bC$ with $b_1, b_5$ nonzero. Finally, rescaling variables allows us to obtain precisely the projective normal form \eqref{eq:CDform}.

\subsection{Toric approach}\label{sec:toricapproach}

For this approach we follow the ideas of \cite[Section 3]{tfmsdp}. An $M$-polarised K3 surface is mirror, in the sense of Dolgachev \cite{mslpk3s}, to an $H$-polarised K3 surface. A generic $H$-polarised K3 surface can be expressed as the minimal resolution of a hypersurface of degree $12$ in the weighted projective space $\bW\bP(1,1,4,6)$. By the Batyrev mirror construction \cite{dpmscyhtv}, a generic $M$-polarised K3 surface can therefore be realized torically as an anticanonical hypersurface in $\bW\bP(1,1,4,6)^{\circ}$, the polar dual of $\bW\bP(1,1,4,6)$.

We derive a normal form for such an anticanonical hypersurface following the method of \cite[Section 4.2]{msag}. Let $L := \bZ^3$ denote the standard lattice and let $L_{\bR} := L \otimes \bR$. An anticanonical hypersurface in $\bW\bP(1,1,4,6)$ corresponds to the reflexive polytope in $L_{\bR}$ with vertices
\begin{align*}
u_0 &= (-1,-1,-1), & u_1&= (11,-1,-1),\\
u_2&=(-1,2,-1), & u_3&=(-1,-1,1).
\end{align*}
The rays of the fan $\Sigma^{\circ} \subset L_{\bR}$ of the toric variety $\bW\bP(1,1,4,6)^{\circ}$ are generated by the above set of vectors. These vectors generate a sublattice $L' \subset L$, whose index is given by the determinant of the matrix with rows $u_1$, $u_2$, $u_3$; this determinant is $6$. The quotient $L/L'$ is the group of order $6$ defined by
\[G := \{(d_0,d_1,d_2,d_3) \in \bZ_{12}^2 \oplus \bZ_3 \oplus \bZ_2 : d_0+d_1+4d_2+6d_3 \equiv 0 \pmod {12}\}/\bZ_{12},\]
where $\bZ_{12} \subset \bZ_{12}^2 \oplus \bZ_3 \oplus \bZ_2$ is embedded diagonally and $\bZ_n := \bZ/n\bZ$ denotes integers modulo $n$.

As $u_0 + u_1+u_2+u_3=0$, a basis for $L'$ is given by $\{u_1,u_2,u_3\}$, so if we view $\Sigma^{\circ}$ as a fan in $L'\otimes \bR$, then $\Sigma^{\circ}$ is the standard fan for $\bW\bP(1,1,4,6)$. In the overlattice $L$, there is a $G$-action on $\bW\bP(1,1,4,6)$ given by
\[(x_0,x_1,x_2,x_3) \longmapsto (\omega^{d_0}x_0, \omega^{d_1}x_1,\omega^{4d_2}x_2, \omega^{6d_3}x_3),\]
where $(d_0,d_1,d_2,d_3) \in G$ and $\omega$ is a primitive twelfth root of unity. $\bW\bP(1,1,4,6)^{\circ}$ is the quotient of $\bW\bP(1,1,4,6)$ by this action.

The total coordinate ring of $\bW\bP(1,1,4,6)^{\circ}$ is given by $\bC[x_0,x_1,x_2,x_3]$ graded by the divisor class group $\Cl(\bW\bP(1,1,4,6)^{\circ})$; see \cite[Section 5.2]{tv} for details. To describe the grading explicitly we use the exact sequence
\[0 \longrightarrow L \longrightarrow \bZ^4 \stackrel{\deg}{\longrightarrow} \Cl(\bW\bP(1,1,4,6)^{\circ}) \longrightarrow 0,\]
where the first map is given by the matrix whose rows are $u_0$, $u_1$, $u_2$, $u_3$. It follows that $\Cl(\bW\bP(1,1,4,6)^{\circ}) \cong \bZ \oplus G$ and the degree map is given by
\[\deg(d_0,d_1,d_2,d_3):= \big(d_0+d_1+4d_2+6d_3, (-d_1-4d_2-6d_3,d_1,d_2,d_3)\big) \in \bZ \oplus G.\]
Thus the grading on $\bC[x_0,x_1,x_2,x_3]$ is given by taking the monomial $x_0^{d_0}x_1^{d_1}x_2^{d_2}x_3^{d_3}$ to $\deg(d_0,d_1,d_2,d_3) \in \bZ \oplus G$.

The anticanonical class of $\bW\bP(1,1,4,6)^{\circ}$ has degree $\deg(1,1,1,1) = (12,0)$. A generic polynomial in $\bC[x_0,x_1,x_2,x_3]$ of degree $(12,0)$ is given by
\[a_0x_0^{12} + a_1x_1^{12}+a_2x_2^3+a_3x_3^2+a_4x_0^6x_1^6 + a_5x_0^4x_1^4x_2+a_6x_0^2x_1^2x_2^2+a_7x_0x_1x_2x_3+a_8x_0^3x_1^3x_3.\]
Using the torus action we can rescale
\[x_0 \longmapsto a_0^{-\frac{1}{12}}x_0, \quad x_1 \longmapsto a_0^{\frac{1}{12}}a_2^{\frac{1}{3}}a_3^{\frac{1}{2}}a_7^{-1}x_1, \quad x_2 \longmapsto a_2^{-\frac{1}{3}}x_2, \quad x_3 \longmapsto a_3^{-\frac{1}{2}}x_3.\] 
Our equation becomes
\[x_0^{12} + b_0x_1^{12}+x_2^3+x_3^2+b_1x_0^6x_1^6 + b_2x_0^4x_1^4x_2+b_3x_0^2x_1^2x_2^2+x_0x_1x_2x_3+b_4x_0^3x_1^3x_3,\]
for $b_0 = \frac{a_0a_1a_2^4a_3^6}{a_4^{12}}$, $b_1=\frac{a_2^2a_3^3}{a_4^5}$, $b_2 = \frac{a_2a_3^2a_5}{a_4^4}$, $b_3=\frac{a_3a_6}{a_4^2}$, and $b_4 = \frac{a_2a_3a_8}{a_4^3}$. Assigning weights $(1,1,4,6)$ to the variables $(x_0,x_1,x_2,x_3)$, we see that this equation is weighted homogeneous, so defines a hypersurface in $\bW\bP(1,1,4,6)$. A generic anticanonical hypersurface in $\bW\bP(1,1,4,6)^{\circ}$ is given by the quotient of such a hypersurface by the group $G$.

The quotient $\bW\bP(1,1,4,6)/G$ is realised by the map
\begin{align*}
\psi\colon \bW\bP(1,1,4,6) &\longrightarrow \bP^8\\
(x_0,x_1,x_2,x_3) &\longmapsto (x_0^{12},x_1^{12},x_2^3,x_3^2,x_0^6x_1^6,x_0^4x_1^4x_2,x_0^2x_1^2x_2^2,x_0x_1x_2x_3,x_0^3x_1^3x_3)
\end{align*}
and the image of this map is isomorphic to $\bW\bP(1,1,4,6)^{\circ}$. The anticanonical hypersurface defined by the equation above becomes the intersection of this image with the hypersurface
\begin{equation}\label{eq:torichypersurface} y_0 + b_0y_1+y_2+y_3+b_1y_4 + b_2y_5+b_3y_6+y_7+b_4y_8 = 0,\end{equation}
where $(y_0,\ldots,y_8)$ are homogeneous coordinates on $\bP^8$. Working on the maximal torus given by setting $y_8 = 1$ and taking all other $y_i \in \bC^*$, we can use relations on the image of $\psi$ to write
\[y_0 = \frac{1}{y_1y_3^2},\quad y_2 = \frac{y_7^3}{y_3},\quad y_4=\frac{1}{y_3},\quad y_5 = \frac{y_7}{y_3},  \quad y_6 = \frac{y_7^2}{y_3}.\]
Substituting in and clearing denominators we obtain
\[1 + b_0y_1^2y_3^2 + y_1y_3y_7^3 + y_1y_3^3 + b_1y_1y_3 + b_2y_1y_3y_7+b_3y_1y_3y_7^2 + y_1y_3^2y_7 + b_4y_1y_3^2 = 0.\]
Now set $x = y_7$, $y = y_3$, and $z = y_1y_3$ to get
\[1 + b_0z^2 + x^3z + y^2z + b_1z + b_2xz+b_3x^2z + xyz + b_4yz = 0.\]
Completing the square in $y$ allows us to eliminate the $xyz$ and $yz$ terms, then completing the cube in $x$ allows us to eliminate the $x^2z$ term, giving
\begin{equation} \label{eq:toricnormalform} 1 + b_0z^2 + x^3z + y^2z +c_1z + c_2xz = 0\end{equation}
for some constants $c_1, c_2 \in \bC$ defined as polynomial functions of the $b_i$. Adding a variable $w$ to homogenise to $\bP^3[x,y,z,w]$ and rescaling variables, we again obtain the projective normal form \eqref{eq:CDform}.

\subsection{Constructing Tyurin degenerations}\label{sec:E6E12}

In the previous sections we have given two constructions of the projective model \eqref{eq:CDform}. We now show how to construct an $M$-polarised Tyurin degeneration from this projective model and derive its properties.

The projective model \eqref{eq:CDform} has two singularities: a singularity of type $A_{11}$ at the point $(w,x,y,z) = (0,0,1,0)$ and a singularity of type $E_6$ at the point $(w,x,y,z) = (0,0,0,1)$. It also contains the two lines $\{w=x=0\}$, which contains both singularities, and $\{w=z=0\}$, which passes through the $A_{11}$ but not the $E_6$. These singularities may be crepantly resolved to give an $M$-polarised K3 surface and the $19$ curves from Figure \ref{fig:dualgraph} arise as follows: $E_1,\ldots,E_{11}$ and $E_{12},\ldots,E_{18}$ are the exceptional curves from the $A_{11}$ and $E_6$ singularities, respectively, $E_0$ is the strict transform of the line $\{w=z=0\}$, and $E_{12}$ is the strict transform of the line $\{w=x=0\}$.

By the discussion in Subsection \ref{sec:moduli}, Type II degenerations of \eqref{eq:CDform}  occur along the locus $\{d =0, a^3 \neq b^2\}$. To construct a Tyurin degeneration, fix values of $(a,b)$ with $a^3 \neq b^2$ and consider the family
\[\calX := \{y^2zw - 4x^3z+3axzw^2+bzw^3 - \tfrac{1}{2}(tz^2w^2+w^4) = 0\} \subset \bP^3[w,x,y,z] \times \Delta,\]
where $t \in \Delta$ is a parameter on the complex disc.

This family is singular along the two curves. Computations in Singular \cite{singular} reveal that the curve $\{w=x=z=0\}$ is a curve of $A_{11}$ singularities, whilst the curve $\{w=x=y=0\}$ consists of $E_6$ singularities for $t \neq 0$ which worsen to a singularity of Milnor number $8$ in the fibre $\{t=0\}$.

These singularities can be crepantly resolved as follows. Begin by blowing up the point $(w,x,y,z,t) = (0,0,0,1,0)$ in the ambient space. The strict transform of the threefold $\calX$ has central fibre consisting of two components (the strict transform of the original fibre $\{t=0\}$ plus an exceptional component) glued along a smooth elliptic curve, and is singular only along the strict transforms of the curves $\{w=x=z=0\}$ and $\{w=x=y=0\}$, which are now curves of $A_{11}$ and $E_6$ singularities respectively. These two curves can then be resolved by blowing up in the usual way.

From the description above it is easy to see that the $19$ curves $E_i$ on a general fibre which generate the lattice $M$ are invariant under monodromy, so the resulting degeneration is a projective $M$-polarised Tyurin degeneration. The strict transform of the original fibre $\{t=0\}$ is isomorphic to $V_{12}$ and the exceptional component is isomorphic to $V_6$, so the central fibre of this degeneration is the stable surface $(V_6,V_{12})$.

\section{Constructions}\label{sec:constructions}

Theorem \ref{thm:analyticclassification} shows that, up to labelling, there are $11$ possibilities for the central fibre of an $M$-polarised Tyurin degeneration. The following is the main result of this paper.

\begin{theorem} \label{thm:main} All of the $11$ possible central fibres from Theorem \ref{thm:analyticclassification} can be realised in projective $M$-polarised Tyurin degenerations. 
\end{theorem}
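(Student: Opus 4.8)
The plan is to mimic, for each of the eleven graphs $\Gamma$ in turn, the explicit construction that produced the $E_6E_{12}$-degeneration in Section \ref{sec:E6E12}: build a projective model for $M$-polarised K3 surfaces tailored to $\Gamma$, degenerate it by sending a modular parameter to zero, resolve crepantly, and read off the central fibre. Since the left--right reflection $E_i \leftrightarrow E_{18-i}$ of Figure \ref{fig:dualgraph} merely interchanges the two components $V_1$ and $V_2$, a $\Gamma$-degeneration and its reflection are the same object up to relabelling; this is why the configurations are counted up to reflection, and it is exactly these eleven representatives that must be constructed. I would organise the constructions so that the two components realise weak del Pezzo surfaces of every degree $-9 \le K_{V_i}^2 \le 9$ (the extreme value $9$ being $\bP^2$ itself), with the degree-$8$ value realised twice, by $\bF_1$ and by $\bF_2$.

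For a fixed $\Gamma$ the first step is to choose a nef and big $\bQ$-divisor $D_\Gamma$ on a generic $M$-polarised K3 surface $X$ whose canonical model $X \to \Proj R(X,D_\Gamma)$ contracts exactly the chains of $(-2)$-curves prescribed by $\Gamma$, and to make the linear system of $D_\Gamma$ explicit as in Sections \ref{sec:linearsystem} and \ref{sec:toricapproach}, presenting $X$ as a hypersurface or complete intersection in a (weighted) projective space. By the moduli description of Subsection \ref{sec:moduli}, the $1$-cusp is cut out by the vanishing of a single parameter, so replacing that parameter by a coordinate $t$ on $\Delta$ yields a one-parameter family $\calX \to \Delta$ of projective models with $M$-polarised general fibre and with special fibre degenerating over the $1$-cusp. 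I would then resolve $\calX$ by a sequence of blow-ups of the ambient total space over $\Delta$, chosen so that the resolution is crepant; by adjunction the outcome is automatically a Kulikov model. Finally I would identify the components $V_1,V_2$ of $X_0$, confirm that each is rational and that they meet along a common smooth anticanonical elliptic curve, trace the specialisations of the generators $E_i$ to check that the surviving $(-2)$-curves assemble into the graph $\Gamma$, and compute the del Pezzo degrees $K_{V_i}^2$ as a consistency check against the relation $K_{V_1}^2 + K_{V_2}^2 = 0$ forced by the Kulikov condition.

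Projectivity, which is the whole point, comes for free in this approach: every family is constructed as a subscheme of a projective bundle over $\Delta$, and every resolution step is a blow-up along a centre lying over $\Delta$, so the structure morphism $\pi\colon\calX\to\Delta$ is projective by construction. This is exactly why the explicit route is preferable to flopping $(-1)$-curves in the $E_6E_{12}$-degeneration: as noted in Section \ref{sec:Tyurin}, those flops are merely analytic and generically destroy projectivity.

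The main obstacle I anticipate is the search for the correct models $D_\Gamma$ together with the verification that their degenerations resolve to precisely the prescribed configurations. The extreme and borderline cases look the most delicate: the degenerations in which one component collapses all the way to $\bP^2$ of degree $9$ (forcing the partner to have degree $-9$), and the two distinct degree-$8$ realisations, since $\bF_2$ must appear as a nontrivial specialisation of $\bF_0=\bP^1\times\bP^1$ rather than as a blow-up of $\bP^2$. For these I expect one must select the ambient weighted projective space and the degeneration parameter with particular care, and verify by explicit local computation---for instance in a computer algebra system, as with the Singular calculations of Section \ref{sec:E6E12}---that the total space is singular along curves of exactly the ADE types demanded by $\Gamma$ and that the resolving blow-ups are crepant. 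Collecting the eleven verified constructions into a single table, as in Table \ref{tab:gammadegenerations}, then completes the proof.
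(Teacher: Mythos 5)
Your overall strategy coincides with the paper's: each of the eleven $\Gamma$-degenerations is obtained by writing down an explicit projective model for $M$-polarised K3 surfaces adapted to $\Gamma$, promoting one coefficient to a coordinate $t$ on $\Delta$, resolving the total space crepantly, and reading off the central fibre; projectivity is indeed automatic because everything lives inside a projective bundle over $\Delta$ and the resolutions are blow-ups. Your consistency check $K_{V_1}^2 + K_{V_2}^2 = 0$ is also correct, since the double curve is anticanonical in each component.

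The problem is that what you have written is a blueprint, not a proof. The entire mathematical content of the theorem is the existence of the eleven explicit models and the verification that their degenerations resolve to the prescribed configurations, and you supply none of it --- you explicitly defer ``the search for the correct models $D_\Gamma$'' and the verification of the resolutions, which is precisely where the proof lives. Concretely, you do not name a single ambient space, divisor class, or defining equation beyond the already-given $E_6E_{12}$ case, and several of the cases require ideas your outline does not anticipate: the $E_{17}$ and $A_1E_{16}$ cases (Sections \ref{sec:E17} and \ref{sec:A1E16}) are \emph{not} hypersurfaces in weighted projective space but require a preliminary (weighted) blow-up of such a model before degenerating; the $A_4E_{14}$ and $A_1E_{16}$ cases involve double covers of half Hirzebruch surfaces; and the decisive step in each construction is a carefully chosen weighted blow-up or blow-up along a non-$\bQ$-Cartier centre in the total space, which determines what the exceptional component actually is. Your ``one divisor per graph'' framing also misses the organising device that makes the construction tractable: a single affine normal form (Section \ref{sec:toricchain}) compactifies in several different weighted projective spaces $\bW\bP(1,1,n,n+2)$ to yield $E_7E_{11}$, $D_5E_{13}$, and $A_1A_2E_{15}$ at once, and a single Weierstrass model yields both $E_8E_{10}$ and (after a Veronese re-embedding) $E_9E_9$ in Section \ref{sec:stdfib}. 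Until the eleven constructions of Table \ref{tab:gammadegenerations} are actually exhibited and checked, the theorem is not proved.
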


We prove this result by explicitly constructing projective models for each of the $11$ possible degenerations. The result is summarised in Table \ref{tab:gammadegenerations}, which lists the possible central fibres $X_0$, along with the methods used to construct the degenerations (i.e. the linear system method from Section \ref{sec:linearsystem} or the toric method from Section \ref{sec:toricapproach}); the projective model used in the construction, expressed as a hypersurface or complete intersection in weighted projective space, or a blow up of one of these; the singularities present in a generic projective model; and a reference to the section where the construction is given. 

\begin{table}
\begin{tabular}{|c|c|c|c|c|}
\hline 
$X_0$ & Construction & Model & Singularities & Section \\
\hline
$(V_9,V_9)$ & Linear system & $X_{2,6} \subset \bW\bP(1,1,1,2,3)$ & $E_8E_8A_1$ & \ref{sec:stdfib} \\
$(V_8,V_{10})$ & Linear system & $X_{12} \subset \bW\bP(1,1,4,6)$ & $E_8E_8A_1$ & \ref{sec:stdfib} \\
$(V_7,V_{11})$ & Toric & $X_{6} \subset \bW\bP(1,1,1,3)$ & $D_{10}E_7$& \ref{sec:toricchain}\\
$(V_6,V_{12})$& Both & $X_{4} \subset \bP^3$ &$A_{11}E_6$& \ref{sec:E6E12}\\
$(V_5,V_{13})$ & Toric & $X_{8} \subset \bW\bP(1,1,2,4)$ & $D_{12}D_5$ & \ref{sec:toricchain}\\
$(V_4,V_{14})$ & Toric & $X_{16} \subset \bW\bP(1,2,5,8)$ & $D_{13}A_4$ & \ref{sec:A4E14}  \\
$(V_3,V_{15})$ & Toric &$X_{10} \subset \bW\bP(1,1,3,5)$& $D_{14}A_2A_1$& \ref{sec:toricchain}\\
$(V_2,V_{16})$ & Linear system & $\mathrm{Bl}(X_{12} \subset \bW\bP(1,1,4,6))$ & $D_{15}A_1$ & \ref{sec:A1E16} \\
$(V_1,V_{17})$ & Linear system & $\mathrm{Bl}(X_{8} \subset \bW\bP(1,1,2,4))$  & $D_{16}$ & \ref{sec:E17} \\
$(V_0,V_{18})$ & Toric & $X_{6} \subset \bW\bP(1,1,1,3)$ & $A_{17}$& \ref{sec:E18} \\
$(\overline{V}_1,V_{17})$ & Linear system & $X_{12} \subset \bW\bP(1,1,4,6)$ & $D_{16}A_1$ & \ref{sec:altfib} \\
\hline

\hline
\end{tabular}
\caption{The $\Gamma$-degenerations, along with the methods used to construct them, the projective models used and their singularities, and section references for the constructions.}
\label{tab:gammadegenerations}
\end{table}

\subsection{Central fibres $(V_8,V_{10})$ and $(V_9,V_9)$}\label{sec:stdfib}

We construct these degenerations using the linear system approach of Section \ref{sec:linearsystem}. We first note that, given an elliptically fibred K3 surface $X$ with fibre $F$ and $(-2)$-section $S$, the $\bQ$-divisor $D = \frac{1}{2}S + F$ is nef and big and the canonical ring $R(X,D)$ is generated in degrees $\leq 6$. A straightforward Riemann-Roch calculation then shows that $\Proj R(X,D)$ can be realized as a hypersurface of degree $12$ in the weighted projective space $\bW\bP(1,1,4,6)$. The birational morphism $\varphi\colon X \to \Proj R(X,D)$ contracts $S$ along with those components of fibres that do not meet $S$; the image of this morphism is therefore the surface obtained by contracting the section in the Weierstrass model of $X$. By standard results in the theory of elliptic surfaces (see, for example, \cite{btes}), we obtain that the image $\varphi(X)$ is given by an equation of the form
\[w^2 = z^3 + a_8(x,y)z + b_{12}(x,y),\]
where $(x,y,z,w)$ are variables of weights $(1,1,4,6)$, respectively, and $a_8$ and $b_{12}$ are homogeneous of degrees $8$ and $12$.

\begin{remark} As elliptic fibrations with section on a K3 surface $X$ give rise to  primitive embeddings of $H$ into the Picard lattice of $X$, with $H$ generated by the classes of a section and fibre, this is effectively the statement that an $H$-polarised K3 can be expressed as the minimal resolution of a hypersurface of degree $12$ in $\bW\bP(1,1,4,6)$, which was already used in Section \ref{sec:toricapproach}.
\end{remark}

To construct a projective model for a $M$-polarised K3 surfaces we apply this approach to the standard fibration. Specifically, we take
\[D=(3;2,4,6,5,4,3,2,1,\tfrac{1}{2},0,0,0,0,0,0,0,0;0\mid 0,0).\]
Then we have the following sections, which generate $R(X,D)$:
\begin{align*}
x & = (3;2,4,6,5,4,3,2,1,0,0,0,0,0,0,0,0,0;0\mid 0,0) \in H^0(\calO_X(\lfloor D \rfloor)),\\
y&= (0;0,0,0,0,0,0,0,0,0,1,2,3,4,5,6,4,2;3\mid 0,0) \in H^0(\calO_X(\lfloor D \rfloor)),\\
z &= (5;4,7,10,8,6,4,2,0,0,0,2,4,6,8,10,7,4;5\mid 1,0) \in H^0(\calO_X(4D)),\\
w &= (8;5,10,15,12,9,6,3,0,0,0,3,6,9,12,15,10,5;8\mid 0,1) \in H^0(\calO_X(6D)).
\end{align*}
From the intersection properties of $D$ one can see that $\varphi$ contracts $E_0,\ldots,E_7$ and $E_{11},\ldots,E_{18}$ to a pair of $E_8$ singularities, which occur in the fibres $\{x=0\}$ and $\{y=0\}$, and contracts the section $E_9$ to an $A_1$ singularity. The curves $E_8$ and $E_{10}$ are taken to $\{x=0\}$ and $\{y = 0\}$ respectively. Standard results on the classification of fibre types in Weierstrass models show that, after a coordinate change, a generic surface satisfying these conditions is given by
\begin{equation}\label{eq:stdweierstrass} \{w^2 = z^3 + a_1x^4y^4z + a_2x^5y^7 + a_3x^6y^6 + x^7y^5\} \subset \bW\bP(1,1,4,6),\end{equation}
for constants $a_i \in \bC$. It is easy to show that a minimal resolution for such a surface is a K3 containing the $19$ curves $E_i$ which generate $M$, so this is a projective model for an $M$-polarised K3 surface.

A Tyurin degeneration is obtained by fixing values of $a_1,a_3 \in \bC$ and letting $a_2 = t$ be a parameter on the complex disc $\Delta$. The resulting degeneration has three curves of singularities: $\{x = y = w^2-z^3 = 0\}$, which is a curve of $A_1$'s, $\{w=y=z=0\}$, which is a curve of $E_8$'s, and $\{w=x=z=0\}$ which consists of $E_8$'s for $t \neq 0$ and worsens to an elliptic singularity of type $\widetilde{E}_8$ in the fibre $\{t = 0\}$.

These singularities can be crepantly resolved as follows. Blow up the point $(w,x,z,t) = (0,0,0,0)$ with weights $(3,1,2,1)$ respectively. The strict transform then has central fibre consisting of two components glued along a smooth elliptic curve, and is singular only along curves of $E_8$, $E_8$, and $A_1$ singularities, which can be resolved in the usual way. The result is an $M$-polarised Tyurin degeneration whose central fibre is a stable surface $(V_8,V_{10})$.

A second Tyurin degeneration, which differs from the $E_8E_{10}$-degeneration above by a single flop, can be constructed from the projective model \eqref{eq:stdweierstrass} as follows. Perform a Veronese embedding of degree $2$ to obtain the complete intersection
\[\{w^2 - z^3 - a_1u_2^4z - a_2u_2^5u_3 - a_3u_2^6 - u_1u_2^5 = u_1u_3-u_2^2 = 0\} \subset \bW\bP(1,1,1,2,3),\]
where $\bW\bP(1,1,1,2,3)$ has coordinates $u_1 = x^2$, $u_2 = xy$, $u_3 = y^2$, $z$ and $w$. Now consider the degeneration given by
\[\{w^2 - z^3 - a_1u_2^4z -a_2u_2^5u_3 - a_3u_2^6 - u_1u_2^5 = u_1u_3-tu_2^2 = 0\} \subset \bW\bP(1,1,1,2,3) \times \Delta.\]
The central fibre of this degeneration consists of two components, one with $u_1 = 0$ and one with $u_3 = 0$, and the threefold total space is singular along three curves of singularities: two of type $E_8$ and one of type $A_1$. After crepantly resolving each of these curves we obtain an $M$-polarised Tyurin degeneration whose central fibre is a stable surface $(V_9,V_{9})$.

\subsection{Central fibre $(\overline{V}_1,V_{17})$}\label{sec:altfib}

We can obtain another projective model by applying the approach of Section \ref{sec:stdfib} to the alternate fibration, with $E_1$ as the distinguished $(-2)$-section. We take
\[D=(1;\tfrac{1}{2},1,2,2,2,2,2,2,2,2,2,2,2,2,2,1,0;1\mid 0,0).\]
Then we have the following sections, which generate $R(X,D)$:
\begin{align*}
x & = (1;0,1,2,2,2,2,2,2,2,2,2,2,2,2,2,1,0;1\mid 0,0) \in H^0(\calO_X(\lfloor D \rfloor)),\\
y&= (0;0,0,0,0,0,0,0,0,0,0,0,0,0,0,0,0,0;0\mid 1,0) \in H^0(\calO_X(\lfloor D \rfloor)),\\
z &= (1;0,0,2,3,4,5,6,7,8,9,10,11,12,13,14,8,2;7\mid 0,0) \in H^0(\calO_X(4D)),\\
w &= (2;0,0,3,4,5,6,7,8,9,10,11,12,13,14,15,8,1;8\mid 0,1) \in H^0(\calO_X(6D)).
\end{align*}
The morphism $\varphi$ contracts $E_0,E_3,\ldots,E_{16},E_{18}$ to a $D_{16}$ singularity, which occurs at the point $(x,y,z,w) = (0,1,0,0)$, and contracts the section $E_1$ to an $A_1$ singularity, which occurs at $(x,y,z,w) = (0,0,1,1)$. The curves $E_2$ and $E_{17}$ are taken to $\{x=w^2-z^3=0\}$ and $\{z = w= 0\}$ respectively. Standard results on the classification of fibre types in Weierstrass models show that, after a coordinate change, a generic surface satisfying these conditions is given by
\begin{equation}\label{eq:altweierstrass} \{w^2 = z^3 + (a_1x^4+a_2x^3y+xy^3)z^2 + a_3x^8z\} \subset \bW\bP(1,1,4,6),\end{equation}
for constants $a_i \in \bC$. It is easy to show that a minimal resolution for such a surface is a K3 containing the $19$ curves $E_i$ which generate $M$, so this is a projective model for an $M$-polarised K3 surface.

A Tyurin degeneration is obtained by fixing values of $a_1,a_2 \in \bC$ and letting $a_3 = t$ be a parameter on the complex disc $\Delta$. This family contains a curve of generically $A_1$ singularities given by $\{t = z = w = 0\}$. Blowing up this curve once, one obtains a degeneration whose central fibre contains two components: the strict transform of the original central fibre is a double cover of $\bW\bP(1,1,4)$ ramified along the smooth quartic curve $\{z+a_1x^4+a_2x^3y+xy^3 = 0\}$, which contains a singularity of type $A_1$ over the $\frac{1}{4}(1,1)$ orbifold point, and the exceptional component is a double cover of the Hirzebruch surface $\bF_4$ ramified in two curves in the classes $(s + 4f)$, and $(s+8f)$ (where $s$ and $f$ denote the classes of the $(-4)$-section and fibre on $\bF_4$ respectively), meeting at a singularity of type $D_{16}$. These components meet along a smooth elliptic curve given as a double cover of the line $\{z=0\} \subset \bP(1,1,4)$ and the $(-4)$-section in $\bF_4$.

The resulting degeneration is singular only in the curves of $D_{16}$ and $A_1$ singularities, which can be crepantly resolved in the usual way. The result is an $M$-polarised Tyurin degeneration whose central fibre is a stable surface $(\overline{V}_1,V_{17})$.

\subsection{Central fibres  $(V_7,V_{11})$, $(V_5,V_{13})$, and $(V_3,V_{15})$} \label{sec:toricchain}

These degenerations are all closely related and it makes sense to construct them together. We apply the toric approach of Section \ref{sec:toricapproach}.

With notation as in Section \ref{sec:toricapproach}, we work on the maximal torus $y_4 = 1$ and take all other $y_i \in \bC^*$, then use relations on the image of $\psi$ to write
\[y_1 = \frac{1}{y_0},\quad y_2=y_5^3,\quad y_3=y_8^2,\quad y_6 = y_5^2,\quad y_7 = y_5y_8.\]
Substituting into Equation \eqref{eq:torichypersurface} and multiplying by $y_0^2$ we obtain
\[y_0^3 + b_0y_0 + y_0^2y_5^3 + y_0^2y_8^2 + b_1y_0^2 + b_2 y_0^2y_5 + b_3y_0^2y_5^2 + y_0^2y_5y_8 + b_4y_0^2y_8 = 0.\]
Now set $y = y_5$, $z = y_0$, and $w = y_0y_8$ to get
\[z^3 + b_0z + y^3z^2 + w^2 + b_1z^2 + b_2 yz^2 + b_3 y^2z^2 + wyz + b_4wz = 0.\]
Completing the square in $w$ allows us to eliminate the $wyz$ and $wz$ terms, the completing the cube in $y$ allows us to eliminate the $y^2z^2$ term, giving
\begin{equation}z^3 + a_1z + y^3z^2 + w^2 + a_2z^2 + a_3yz^2 = 0,\label{eq:toricchain}\end{equation}
for some constants $a_i \in \bC$.

Now we make an important observation: by adjusting the weights, there are three different ways to homogenize this equation to define a hypersurface in weighted projective space. Specifically, if we take $(y,z,w)$ to have weights $(1,n,n+2)$, respectively, for $n \in \{1,2,3\}$, then we can homogenize by adding a variable $x$ of weight $1$ to obtain a hypersurface
\[\{w^2+xz(a_1x^{n+3}+x^{3-n}z^2+a_2x^3z+a_3x^2yz+y^3z) = 0\} \subset \bW\bP(1,1,n,n+2).\]
This hypersurface contains a $D_{8+2n}$ singularity at $(w,x,y,z) = (0,1,0,0)$ and an $E_{9-2n}$ singularity at $(w,x,y,z) = (0,0,1,0)$ (where we use the convention that $E_5 = D_5$ and $E_3 = A_2 + A_1$). The resolution is an $M$-polarised K3 surface, where $E_0,E_2,\ldots,E_{8+2n}$ are the exceptional curves coming from the $D_{8+2n}$ singularity, $E_{10+2n},\ldots,E_{18}$ are the exceptional curves coming from the $E_{9-2n}$ singularity, and $E_1$ and $E_{9+2n}$ are the strict transforms of the lines $\{z=w=0\}$ and $\{x = w=0\}$, respectively. 

\begin{remark}
If $\varphi_n\colon X \to \bW\bP(1,1,n,n+2)$ is the projective model for an $M$-polarised K3 surface $X$ described above (for $n \in \{1,2,3\}$), we may describe the pull-back $\varphi_n^*\calO(1)$ as a $\bQ$-divisor $D_n$ on $X$ and give generating sections of the canonical ring $R(X,D_n)$ corresponding to the coordinates $(x,y,z,w)$. Using this, one may check the description above using the linear system method from Section \ref{sec:linearsystem}.  Explicitly, one obtains
\begin{align*}D_1&= (1;0,1,2,2,2,2,2,2,2,2,2,3,4,5,6,4,2;3\mid 0,0), \\
D_2 &= (1;0,1,2,2,2,2,2,2,2,2,2,2,2,\tfrac{5}{2},3,2,1;\tfrac{3}{2}\mid 0,0),\\
D_3 &= (1;0,1,2,2,2,2,2,2,2,2,2,2,2,2,2,\tfrac{4}{3},\tfrac{1}{3};1\mid 0,0).
\end{align*}
Then $R(X,D_1)$ is generated by
\begin{align*}
x&= (1;0,1,2,2,2,2,2,2,2,2,2,3,4,5,6,4,2;3\mid 0,0) \in H^0(\calO_X(D_1)),\\
y &= (0;0,0,0,0,0,0,0,0,0,0,0,1,2,3,4,3,2;2\mid 1,0) \in H^0(\calO_X(D_1)),\\
z&= (4;2,5,8,7,6,5,4,3,2,1,0,0,0,0,0,0,0;0\mid 0,0) \in H^0(\calO_X(D_1)),\\
w &= (5;1,5,9,8,7,6,5,4,3,2,1,3,5,7,9,6,3;5\mid 0,1) \in H^0(\calO_X(3D_1));
\end{align*}
$R(X,D_2)$ is generated by
\begin{align*}
x&= (1;0,1,2,2,2,2,2,2,2,2,2,2,2,2,3,2,1;1\mid 0,0) \in H^0(\calO_X(\lfloor D_2 \rfloor)),\\
y & = (0;0,0,0,0,0,0,0,0,0,0,0,0,0,0,1,1,1;0\mid 1,0) \in H^0(\calO_X(\lfloor D_2 \rfloor)),\\
z &= (5;2,6,10,9,8,7,6,5,4,3,2,1,0,0,0,0,0;0\mid 0,0) \in H^0(\calO_X(2D_2)),\\
w &= (6;1,6,11,10,9,8,7,6,5,4,3,2,1,2,3,2,1;2\mid 0,1) \in H^0(\calO_X(4D_2));
\end{align*}
and $R(X,D_3)$ is generated by
\begin{align*}
x&= (1;0,1,2,2,2,2,2,2,2,2,2,2,2,2,2,1,0;1\mid 0,0) \in H^0(\calO_X(\lfloor D_3 \rfloor)),\\
y & = (0;0,0,0,0,0,0,0,0,0,0,0,0,0,0,0,0,0;0\mid 1,0) \in H^0(\calO_X(\lfloor D_3 \rfloor)),\\
z &= (6;2,7,12,11,10,9,8,7,6,5,4,3,2,1,0,0,0;0\mid 0,0) \in H^0(\calO_X(3D_3)),\\
w &= (7;1,7,13,12,11,10,9,8,7,6,5,4,3,2,1,0,0;1\mid 0,1) \in H^0(\calO_X(\lfloor 5D_3\rfloor)).
\end{align*}
\end{remark}

A Tyurin degeneration is obtained by fixing values of $a_2,a_3 \in \bC$ and letting $a_1 = t$ be a parameter on the complex disc $\Delta$. This family contains a curve of generically $A_1$ singularities given by $\{t = z = w = 0\}$. Blowing up this curve once, one obtains a degeneration whose central fibre contains two components: the strict transform of the original central fibre is a double cover of $\bW\bP(1,1,n)$ ramified along the quartic curve $\{x(x^{2-n}z+a_2x^3+a_3x^2y+y^3) = 0\}$, which contains a singularity of type $E_{9-2n}$, and the exceptional component is a double cover of the Hirzebruch surface $\bF_n$ ramified in three curves in the classes $f$, $(s + nf)$, and $(s+(n+3)f)$ (where $s$ and $f$ denote the classes of the $(-n)$-section and fibre on $\bF_n$ respectively), meeting at a singularity of type $D_{8+2n}$. These components meet along a smooth elliptic curve given as a double cover of the line $\{z=0\} \subset \bP(1,1,n)$ and the $(-n)$-section in $\bF_n$.

The resulting degeneration is singular only in the curves of $D_{8+2n}$ and $E_{9-2n}$ singularities, which can be resolved in the usual way.  The result is an $M$-polarised Tyurin degeneration whose central fibre is a stable surface $(V_7,V_{11})$ for $n=1$, a stable surface $(V_5,V_{13})$ for $n=2$, and a stable surface $(V_3,V_{15})$ for $n=3$.

\begin{remark} The approach from this section can also be applied with $n=4$ to yield the degeneration with central fibre $(\overline{V}_1,V_{17})$ from Section \ref{sec:altfib}. However, given the relationship between this degeneration and the alternate fibration, and because it is geometrically of a slightly different character to the degenerations in this section (owing to the fact that the branch divisor is not divisible by $x$), we chose to keep it in its own section.
\end{remark}

\subsection{Central fibre $(V_4,V_{14})$}\label{sec:A4E14}

The approach from Section \ref{sec:toricchain} can be used to construct one further case. Starting from the affine equation \eqref{eq:toricchain},
\[z^3 + a_1z + y^3z^2 + w^2 + a_2z^2 + a_3yz^2 = 0,\]
we can take $(y,z,w)$ to have weights $(2,5,8)$, respectively, then homogenize by adding a variable $x$ of weight $1$ to obtain a hypersurface
\[\{w^2+z(a_1x^{11}+a_2x^6z+a_3x^4yz+xz^2+y^3z) = 0\} \subset \bW\bP(1,2,5,8).\]
This hypersurface contains a $D_{13}$ singularity at $(x,y,z,w) = (0,1,0,0)$ and an $A_4$ singularity at $(x,y,z,w) = (0,0,1,0)$. The resolution is an $M$-polarised K3 surface, where $E_0,E_2,\ldots,E_{13}$ are the exceptional curves coming from the $D_{13}$ singularity, $E_{15},\ldots,E_{18}$ are the exceptional curves from the $A_4$, and $E_1$ and $E_{14}$ are the strict transforms of the curves $\{z=w=0\}$ and $\{x = w^2+y^3z^2 = 0\}$ respectively.

\begin{remark}
If $\varphi\colon X \to \bW\bP(1,2,5,8)$ is the projective model for an $M$-polarised K3 surface $X$ described above, we may describe the pull-back $\varphi^*\calO(1)$ as a $\bQ$-divisor $D$ on $X$ and give generating sections of the canonical ring $R(X,D)$ corresponding to the coordinates $(x,y,z,w)$. Using this, one may check the description above using the linear system method from Section \ref{sec:linearsystem}.  Explicitly, one obtains
\[D=(\tfrac{1}{2};0,\tfrac{1}{2},1,1,1,1,1,1,1,1,1,1,1,1,\tfrac{6}{5},\tfrac{4}{5}, \tfrac{2}{5};\tfrac{3}{5}\mid 0,0)\]
and we have the following sections, which generate $R(X,D)$:
\begin{align*}
x & = (0;0,0,1,1,1,1,1,1,1,1,1,1,1,1,1,0, 0;0\mid 0,0) \in H^0(\calO_X(\lfloor D \rfloor)),\\
y&= (0;0,0,0,0,0,0,0,0,0,0,0,0,0,0,0,0, 0;0\mid 1,0) \in H^0(\calO_X(\lfloor 2D \rfloor)),\\
z &= (5;2,5,11,10,9,8,7,6,5,4,3,2,1,0,0,0,0;0\mid 0,0) \in H^0(\calO_X(\lfloor 5D \rfloor)),\\
w &= (6;1,6,11,10,9,8,7,6,5,4,3,2,1,0,0,0,0;0\mid 0,1) \in H^0(\calO_X(\lfloor 8D \rfloor)).
\end{align*}
\end{remark}

A Tyurin degeneration is obtained by fixing values of $a_2,a_3 \in \bC$ and letting $a_1 = t$ be a parameter on the complex disc $\Delta$. This family contains a curve of generically $A_1$ singularities given by $\{t = z = w = 0\}$. Blowing up this curve once, one obtains a degeneration whose central fibre contains two components: the strict transform of the original central fibre is a double cover of $\bW\bP(1,2,5)$ ramified along the degree $6$ curve $\{a_2x^6 + a_3x^4y+ xz + y^3  = 0\}$, which contains a singularity of type $A_4$ over the point $(0,0,1)$. 

The exceptional component is most simply described as a double cover of a \emph{half Hirzebruch surface}. The following definition is fairly well-known.

\begin{definition} The half Hirzebruch surface $\bF_{n+\frac{1}{2}}$ is defined as follows. First blow up the Hirzebruch surface $\bF_n$ at a point on the $(-n)$-section, then blow up again at the intersection between the two $(-1)$-curves in the reducible fibre that results; the resulting surface has a $\bP^1$-fibration with a single reducible fibre consisting of three curves with self-intersections $(-2,-1,-2)$ and multiplicities $(1,2,1)$. Finally, contract the two $(-2)$-curves to a pair of $A_1$ singularities. The resulting singular surface is $\bF_{n+\frac{1}{2}}$.  

If $s$ and $f$ are the strict transforms of the $(-n)$-section and a generic fibre from $\bF_n$, then the Weil divisor class group of $\bF_{n+\frac{1}{2}}$ is generated by $s$ and $\frac{1}{2}f$, with intersection numbers $s^2 = -n-\frac{1}{2}$, $s.f = 1$, and $f^2 = 0$.\end{definition}

With this definition, the exceptional component is a double cover of the half Hirzebruch surface $\bF_{\frac{5}{2}}$ ramified over two curves in the classes $(s + \frac{5}{2}f)$ and $(s+\frac{11}{2}f)$, chosen so that the double cover has a singularity of type $D_{13}$. This component meets the strict transform of the original central fibre along a smooth elliptic curve, given as a double cover of the line $\{z=0\} \subset \bP(1,2,5)$ and the $(-\frac{5}{2})$-section in $\bF_{\frac{5}{2}}$.

The resulting degeneration is singular only in the curves of $D_{13}$ and $A_4$ singularities, which can be resolved in the usual way. The result is an $M$-polarised Tyurin degeneration whose central fibre is a stable surface $(V_4,V_{14})$.

\begin{remark} We note that this method can also be used to construct an alternative model for the degeneration with central fibre $(V_6,V_{12})$ from Sections \ref{sec:linearsystem}, \ref{sec:toricapproach}, and \ref{sec:E6E12}, by assigning weights $(2,3,6)$ to $(y,z,w)$ and compactifying to $\bW\bP(1,2,3,6)$. We chose to retain the construction given in those sections for compatibility with the existing literature on $M$-polarised K3 surfaces, which makes extensive use of the normal form \eqref{eq:CDform}.
\end{remark}

\subsection{Central fibre $(V_0,V_{18})$}\label{sec:E18}

To construct this degeneration we return to the affine equation \eqref{eq:toricnormalform}:
\[ 1 + b_0z^2 + x^3z + y^2z +c_1z + c_2xz = 0.\]
In Section \ref{sec:toricapproach} we obtained a projective model by compactifying this to a hypersurface in $\bP^4$. However, as we saw in Section \ref{sec:toricchain}, we can obtain different projective models by compactifying to different weighted projective spaces.

We can compactify the equation above to a sextic hypersurface in $\bW\bP(1,1,1,3)$ by assigning weights $(1,1,3)$ to $(x,y,z)$ and introducing a new weight $1$ variable $w$:
\[w^6 + b_0z^2 + x^3z + wy^2z + c_1w^3z+ c_2w^2xz = 0.\]
By the results of Sections \ref{sec:moduli} and \ref{sec:toricapproach}, it is easy to see that the parameters $(b_0,c_1,c_2)$ in the equation above are related to the modular parameters $(a,b,d)$ describing an $M$-polarised K3 surface by
\[ b_0 = d, \qquad c_1 = 2b, \qquad c_2 = -3a,\]
from which it follows that $b_0 \neq 0$ for any $M$-polarised K3 surface. We can therefore make a coordinate change $z \mapsto \frac{z}{b_0}$ and clear denominators to obtain
\[b_0w^6 + z^2 + x^3z + wy^2z + c_1w^3z + c_2w^2xz = 0.\]
Completing the square in $z$ and rearranging gives the projective model
\[\{z^2 + ((b_1+c_1)w^3 + x^3 + wy^2 + c_2w^2x)((b_1-c_1)w^3-x^3-wy^2-c_2w^2x) = 0\}\]
in $\bW\bP(1,1,1,3)$, where $b_1 \in \bC$ is a square root of $4b_0$.

This model is a double cover of $\bP^2$ ramified over the two cubic curves:
\begin{align*}
\{w^2y + x^3 + c_2w^2x + (c_1+b_1)w^3 &= 0\},\\
\{w^2y + x^3 + c_2w^2x + (c_1-b_1)w^3 &= 0\}.
\end{align*}
These two curves meet in the single point $(w,x,y) = (0,0,1)$, over which the double cover has a singularity of type $A_{17}$. The resolution is an $M$-polarised K3 surface, where $E_1,\ldots,E_{17}$ are the exceptional curves coming from the resolution of the $A_{17}$ singularity and $E_0$, $E_{18}$ are the curves $\{w= z+x^3 = 0\}$ and $\{w = z-x^3 = 0\}$, respectively, so this is a projective model.

A Tyurin degeneration is obtained by fixing values of $c_1,c_2 \in \bC$ and letting $b_1 = t$ be a parameter on the complex disc $\Delta$. The central fibre of this family is a union of the two surfaces
\begin{align*}
\overline{Y}_1 := \{z + c_1w^3 + x^3 + wy^2 + c_2w^2x &= 0\} \subset \bW\bP(1,1,1,3),\\
\overline{Y}_2 := \{z - c_1w^3 - x^3 - wy^2 - c_2w^2x &= 0\} \subset \bW\bP(1,1,1,3),
\end{align*}
each of which is isomorphic to $\bP^2$ and is a non-$\bQ$-Cartier divisor in the total space of the degeneration. The curves $E_0$ and $E_{18}$ degenerate to $\{w = z+x^3 = 0\} \subset \overline{Y}_1$ and $\{w = z-x^3 = 0\}\subset \overline{Y}_2$, respectively. 

To resolve the singularities of this degeneration, first blow-up the locus $\{t = z + c_1w^3 + x^3 + wy^2 + c_2w^2x = 0\}$ in the total space of the family; this corresponds to blowing along a non-$\bQ$-Cartier divisor. The strict transform $Y_1$ of $\overline{Y}_1$ is the blow-up of $\overline{Y}_1$ at the single point $\{w=x=z=0\}$, and the strict transform $Y_2$ of $\overline{Y}_2$ is isomorphic to $\overline{Y}_2$.

After performing this blow-up, the family is singular only along a smooth curve of $A_{17}$ singularities, which intersects the central fibre in a point of $Y_1$ away from the double curve. Resolving this curve in the usual way, we obtain an $M$-polarised Tyurin degeneration whose central fibre is a stable surface $(V_0,V_{18})$. The inflectional tangent line $F_0 \subset V_0 \cong Y_2$ is the curve $\{w = z-x^3 = 0\}$.

\subsection{Central fibre $(V_1,V_{17})$}\label{sec:E17}

The final two cases are somewhat more technical, as they are not constructed directly as hypersurfaces in weighted projective space; instead, we start with a hypersurface in weighted projective space and perform a single blow-up. 

To construct a degeneration with central fibre $(V_1,V_{17})$ we use the linear system approach from Section \ref{sec:linearsystem}. We begin with the nef and big $\bQ$-divisor
\[D = (1;1,2,3,3,3,3,3,3,3,3,3,3,3,3,3,\tfrac{3}{2}, 0;\tfrac{3}{2}\mid 0,0).\]
By Riemann-Roch, $H^0(\calO_X(\lfloor D \rfloor))$ is generated by the two sections
\begin{align*}
x & = (1;1,2,3,3,3,3,3,3,3,3,3,3,3,3,3,1, 0;1\mid 0,0) \in H^0(\calO_X(\lfloor D \rfloor)),\\
y&= (1;1,1,1,1,1,1,1,1,1,1,1,1,1,1,1,0, 0;0\mid 1,0) \in H^0(\calO_X(\lfloor D \rfloor)).
\end{align*}
Now, $2D$ is a Cartier divisor and $h^0(\calO_X(2D)) = 4$, but we only obtain three generators $\{x^2,xy,y^2\}$ from above. Moreover, we cannot obtain any extra generators of $H^0(\calO_X(2D))$ using the relations \eqref{eq:Srelation} and \eqref{eq:Trelation}. However, as the linear system $|2D|$ is not an elliptic pencil and $D$ is nef and big, by \cite[Propositions 1 and 8]{fk3s} we have that the generic member of $|2D|$ is a smooth, irreducible curve. Let $Z$ be any such curve and let $z \in H^0(\calO_X(2D))$ be its defining section. The curve $Z$ intersects our divisors $E_i$, $S$ and $T$ as follows:
\[Z.E_0 = 2, \quad Z.E_{17} = 3, \quad Z.S = 2, \quad Z.T=5, \quad Z.E_i = 0 \text{ for } i \neq 0,17.\]
Then $\{x^2,xy,y^2,z\}$ generate $H^0(\calO_X(2D))$, and one can show that the canonical ring is generated by $x,y,z$ and
\[w = (0;1,2,3,4,5,6,7,8,9,10,11,12,13,14,15,8,1;8\mid 0,1) \in H^0(\calO_X(4D)).\]
It follows that $\Proj R(X,D)$ is isomorphic to $\bW\bP(1,1,2,4)$ and the image $\varphi(X)$ is defined by an equation of degree $8$.

\begin{remark}\label{rem:f3} It follows from this computation that the divisor
\[ (0;1,2,3,4,5,6,7,8,9,10,11,12,13,14,15,8,2;7\mid 0,0) \in H^0(\calO_X(\lfloor 3D \rfloor)),\]
obtained by applying the relation \eqref{eq:E8relation} to $x^3$, can be written as a linear combination of the generators $\{x^3,x^2y,xy^2,y^3,xz,xy\}$. We denote this linear combination by $f_3(x,y,z)$.
\end{remark}

From the intersection properties of $D$ one can see that $\varphi$ contracts the curves $E_1,\ldots,E_{16},E_{18}$ to a $D_{17}$ singularity, which occurs at the point $(x,y,z,w) = (0,0,1,0)$. The curve $E_{17}$ is taken to $\{w = f_3(x,y,z) = 0\}$, where $f_3$ is the same as in Remark \ref{rem:f3}, and $E_0$ is taken to the intersection of the hyperplane $\{x=0\}$ with our projective model.

After completing the square in $w$, we may assume that our projective model is a double cover of $\bW\bP(1,1,2)$ given by an equation of the form
\[w^2 = f_{3}(x,y,z)g_5(x,y,z),\]
where $g_5$ is an equation of degree $5$ defining the image of the divisor $T$. Moreover, the intersection properties of $D$ show that the two curves $\{f_3(x,y,z) = 0\}$ and $\{g_5(x,y,z) = 0\}$ are both tangent to $\{x=0\}$ at $(x,y,z) = (0,0,1)$, and they do not intersect away from this point.

After a coordinate change in $z$, we may assume that $f_3(x,y,z) = xz + cy^3$, for some nonzero $c \in \bC$. From the intersection condition between $f_3$ and $g_5$, it follows that $g_5$ must have the form
\[g_5(x,y,z) = (xz + cy^3)(a_0z + a_1y^2 + a_2xy + a_3x^2) + a_4x^5,\]
for $a_i \in \bC$ constants with $a_0,a_4 \neq 0$. After rescaling $x$, $y$, $z$ we may assume that $c = a_0 = 1$, so our projective model is given by
\[w^2 = (xz+y^3) \big((xz + cy^3)(a_0z + a_1y^2 + a_2xy + a_3x^2) + a_4x^5\big).\]
One may verify that the minimal resolution of such a surface is a K3 containing the $19$ curves $E_i$ which generate $M$.

To construct a degeneration we begin by partially resolving the $D_{17}$ singularity. To do this we blow-up the model above once at the point $(0,0,1,0)$, so that it becomes a double cover of the Hirzebruch surface $\bF_2$ instead of the cone $\bW\bP(1,1,2)$. The Hirzebruch surface $\bF_2$ can be realised explicitly as the rational scroll $\bF(2,0)$, which is constructed as the quotient of $\left(\bA^2 \setminus \{0\}\right)^2$ by the action of $(\bC^*)^2$ defined by
\begin{align*}
(\lambda,1) \colon (x,y;u,v) &\longmapsto (\lambda x, \lambda y;\lambda^{-2}u,v)\\
(1,\mu) \colon (x,y;u,v) &\longmapsto (x, y;\mu u, \mu v),
\end{align*}
where $(x,y;u,v)$ are coordinates on $\left(\bA^2 \setminus \{0\}\right)^2$ and $(\lambda,\mu) \in (\bC^*)^2$. For generalities on rational scrolls, we refer the reader to \cite[Chapter 2]{chapters}.

After this blow-up, our projective model becomes a double cover of $\bF_2$ ramified over the $(-2)$-section $s$ and two curves in the linear systems $|s+3f|$ and $|s+5f|$, where $f$ is the class of a fibre of the ruling on $\bF_2$. In the bihomogeneous coordinates $(x,y;u,v)$ introduced in the rational scroll description above, this branch curve is given explicitly by
\[u(vx+uy^3) \big((vx + uy^3)(v + a_1uy^2 + a_2uxy + a_3ux^2) + a_4u^2x^5\big).\]
Note that it has a singularity of type $D_{16}$ at the point $(x,y;u,v) = (0,1;0,1)$.

A Tyurin degeneration is obtained by fixing values of $a_1,a_2,a_3 \in \bC$ and letting $a_4 = t$ be a parameter on the complex disc $\Delta$. This family contains a curve of generically $A_1$ singularities given by $\{t = w = vx + uy^3 = 0\}$. Blowing up this curve once, one obtains a degeneration whose central fibre contains two components: the strict transform of the original central fibre is a double cover of $\bF_2$ ramified over the $(-2)$-section $\{u=0\}$ and the curve $\{v + a_1uy^2 + a_2uxy + a_3ux^2 = 0\} \in |s+2f|$, and the exceptional component is a double cover of the Hirzebruch surface $\bF_4$ ramified in three curves in the classes $f$, $(s + 4f)$, and $(s+7f)$ (where $s$ and $f$ denote the classes of the $(-4)$-section and fibre on $\bF_4$ respectively), meeting at a singularity of type $D_{16}$. These components meet along a smooth elliptic curve given as a double cover of the curve $\{vx + uy^3 = 0\} \subset \bF_2$ and the $(-4)$-section in $\bF_4$.

The resulting degeneration is singular only in the curve of $D_{16}$ singularities, which can be resolved in the usual way. The result is an $M$-polarised Tyurin degeneration whose central fibre is a stable surface $(V_1,V_{17})$.

\subsection{Central fibre $(V_2,V_{16})$}\label{sec:A1E16}

To construct the final projective Tyurin degeneration, we begin with the projective model from Section \ref{sec:altfib}, given by Equation \eqref{eq:altweierstrass}:
\[\{w^2 = z^3 + (a_1x^4+a_2x^3y+xy^3)z^2 + a_3x^8z\} \subset \bW\bP(1,1,4,6).\]
Recall that this model is a double cover of $\bW\bP(1,1,4)$ branched along the divisor
\[B= \{z^3 + (a_1x^4+a_2x^3y+xy^3)z^2 + a_3x^8z\} \subset \bW\bP(1,1,4),\]
which contains a singularity of type $D_{16}$ at the point $(x,y,z) = (0,1,0)$.

As in Section \ref{sec:E17}, to construct our degeneration we begin by partially resolving the $D_{16}$ singularity. Explicitly, this is given by performing a weighted blow-up of $\bW\bP(1,1,4)$ along the ideal $\langle xy^3 + z, z^2 \rangle$; i.e. $z$ has weight $1$ and $xy^3-z$ has weight $2$. Our projective model is a double cover of the resulting surface $S$, branched along the strict transform of the divisor $B$.

In the affine chart $y=1$ we can describe this blow-up explicitly as follows. Introduce a new variable $s := x+z$ and use it to eliminate $x$. The equation of $B$ becomes
\[\{z(a_1(s-z)^4+a_2(s-z)^3+s)z+a_3(s-z)^8)=0\} \subset \bA^2[s,z].\]
Now perform a weighted blow-up along the ideal $\langle s,z^2 \rangle$. The resulting surface $S$ is given by the equation
\[ \{sv^2 = z^2u\} \subset \bA^2[s,z] \times \bW\bP(1,2),\]
where $(u,v)$ are variables of weights $(2,1)$ on $\bW\bP(1,2)$, and the strict transform of $B$ is the intersection of this surface with
\[\big\{(s-z)(su-2zu+v^2)\big(a_1(s-z)v^2+a_2v^2 + a_3z(s-z)^3(su-2zu+v^2)\big) + uv^2  = 0\big\}.\]
The double cover branched along $B$ has a singularity of type $D_{15}$ over the point $s=z=v=0$.

A Tyurin degeneration is obtained by fixing values of $a_1,a_2 \in \bC$ and letting $a_3 = t$ be a parameter on the complex disc $\Delta$. This family contains a curve of generically $A_1$ singularities, given in the chart above by $\{t = z = v = 0\}$. Blowing up this curve once, one obtains a degeneration whose central fibre contains two components: the strict transform of the original central fibre is a double cover of $S$ ramified over a curve given in the chart above by $\{(s-z)(su-2zu+v^2)(a_1(s-z)+a_2) + u  = 0\}$, and the exceptional component is a double cover of the half Hirzebruch surface $\bF_{\frac{7}{2}}$ ramified in two curves in the classes $(s + \frac{7}{2}f)$ and $(s+\frac{13}{2}f)$ (where $s$ and $f$ denote the classes of the $(-\frac{7}{2})$-section and a generic fibre on $\bF_{\frac{7}{2}}$ respectively), chosen so that the double cover has a singularity of type $D_{15}$. These components meet along a smooth elliptic curve given as a double cover of the curve $\{z=v = 0\} \subset S$ and the $(-\frac{7}{2})$-section in $\bF_{\frac{7}{2}}$.

The resulting degeneration is singular only in the curve of $D_{15}$ singularities, which can be resolved in the usual way. The result is an $M$-polarised Tyurin degeneration whose central fibre is a stable surface $(V_2,V_{16})$.

\bibliography{publications,preprints}

\providecommand{\bysame}{\leavevmode\hbox to3em{\hrulefill}\thinspace}
\providecommand{\MR}{\relax\ifhmode\unskip\space\fi MR }
% \MRhref is called by the amsart/book/proc definition of \MR.
\providecommand{\MRhref}[2]{%
  \href{http://www.ams.org/mathscinet-getitem?mr=#1}{#2}
}
\providecommand{\href}[2]{#2}
\begin{thebibliography}{CDLW09}

\bibitem[ABE22]{cmek3sspt}
V.~Alexeev, A.~Brunyate, and P.~Engel, \emph{Compactifications of moduli of
  elliptic {K3} surfaces: Stable pair and toroidal}, Geom. Topol. \textbf{26}
  (2022), no.~8, 3525--3588.

\bibitem[AE23]{cmk3s}
V.~Alexeev and P.~Engel, \emph{Compact moduli of {K3} surfaces}, Ann. of Math.
  (2) \textbf{198} (2023), no.~2, 727--789.

\bibitem[Bat94]{dpmscyhtv}
V.~V. Batyrev, \emph{Dual polyhedra and mirror symmetry for {C}alabi-{Y}au
  hypersurfaces in toric varieties}, J. Algebraic Geom. \textbf{3} (1994),
  no.~3, 493--535.

\bibitem[CD07]{milpk3s}
A.~Clingher and C.~F. Doran, \emph{Modular invariants for lattice polarized
  {K3} surfaces}, Michigan Math. J. \textbf{55} (2007), no.~2, 355--393.

\bibitem[CDLW09]{nfk3smmp}
A.~Clingher, C.~F. Doran, J.~Lewis, and U.~Whitcher, \emph{Normal forms, {K3}
  surface moduli and modular parametrizations}, Groups and Symmetries, CRM
  Proc. Lecture Notes, vol.~47, Amer. Math. Soc., Providence, RI, 2009,
  pp.~81--98.

\bibitem[CK99]{msag}
D.~A. Cox and S.~Katz, \emph{Mirror symmetry and algebraic geometry},
  Mathematical Surveys and Monographs, vol.~68, American Mathematical Society,
  1999.

\bibitem[CLS11]{tv}
D.~A. Cox, J.~B. Little, and H.~K. Schenck, \emph{Toric varieties}, Graduate
  Studies in Mathematics, vol. 124, American Mathematical Society, 2011.

\bibitem[DGPS23]{singular}
W.~Decker, G.-M. Greuel, G.~Pfister, and H.~Sch{\"{o}}nemann,
  \emph{\textsc{Singular} {4-3-2} --- {A} computer algebra system for
  polynomial computations}, \texttt{http://www.singular.uni-kl.de}, 2023.

\bibitem[DHNT15]{flpk3sm}
C.~F. Doran, A.~Harder, A.~Y. Novoseltsev, and A.~Thompson, \emph{Families of
  lattice polarized {K3} surfaces with monodromy}, Int. Math. Res. Notices
  (2015), no.~23, 12265--12318.

\bibitem[DHT17]{mstdfcym}
C.~F. Doran, A.~Harder, and A.~Thompson, \emph{Mirror symmetry, {T}yurin
  degenerations and fibrations on {C}alabi-{Y}au manifolds}, String-Math 2015,
  Proc. Symp. Pure Math., vol.~96, American Mathematical Society, 2017,
  pp.~93--131.

\bibitem[Dol96]{mslpk3s}
I.~V. Dolgachev, \emph{Mirror symmetry for lattice polarised {K}3 surfaces}, J.
  Math. Sci. \textbf{81} (1996), no.~3, 2599--2630.

\bibitem[FM83]{bgd1}
R.~Friedman and D.~Morrison, \emph{The birational geometry of degenerations: An
  overview}, The Birational Geometry of Degenerations (R.~Friedman and
  D.~Morrison, eds.), Progr. Math., vol.~29, Birkh{\"{a}}user, 1983, pp.~1--32.

\bibitem[Fri84]{npgttk3s}
R.~Friedman, \emph{A new proof of the global {T}orelli theorem for {$K3$}
  surfaces}, Ann. of Math. (2) \textbf{120} (1984), no.~2, 237--269.

\bibitem[Har85]{bup2bd}
B.~Harbourne, \emph{Blowings-up of $\mathbf{P}^2$ and their blowings-down},
  Duke Math. J. \textbf{52} (1985), no.~1, 129--148.

\bibitem[Ino78]{desk3sni}
H.~Inose, \emph{Defining equations of singular {$K3$} surfaces and a notion of
  isogeny}, Proceedings of the {I}nternational {S}ymposium on {A}lgebraic
  {G}eometry ({K}yoto {U}niv., {K}yoto, 1977), Kinokuniya Book Store, Tokyo,
  1978, pp.~495--502.

\bibitem[Kon92]{aak3satpg}
S.~Kond{\={o}}, \emph{Automorphisms of algebraic {K3} surfaces which act
  trivially on {P}icard groups}, J. Math. Soc. Japan \textbf{44} (1992), no.~1,
  75--98.

\bibitem[KT21]{tfmsdp}
R.~Kooistra and A.~Thompson, \emph{Threefolds fibred by mirror sextic double
  planes}, Canad. J. Math. \textbf{73} (2021), no.~5, 1305--1346.

\bibitem[Loo81]{rsac}
E.~Looijenga, \emph{Rational surfaces with an anticanonical cycle}, Ann. of
  Math. (2) \textbf{114} (1981), no.~2, 267--322.

\bibitem[May72]{fk3s}
A.~L. Mayer, \emph{Families of {$K-3$} surfaces}, Nagoya Math. J. \textbf{48}
  (1972), 1--17.

\bibitem[Mir89]{btes}
R.~Miranda, \emph{The basic theory of elliptic surfaces}, Dottorato di Ricerca
  in Matematica [Doctorate in Mathematical Research], ETS Editrice, Pisa, 1989.

\bibitem[Nik80]{fagkk3s}
V.~V. Nikulin, \emph{Finite automorphism groups of {K}{\"{a}}hler {$K3$}
  surfaces}, Trans. Moscow Math. Soc. \textbf{38} (1980), no.~2, 71--135.

\bibitem[Rei97]{chapters}
M.~Reid, \emph{Chapters on algebraic surfaces}, Complex Algebraic Geometry
  ({P}ark {C}ity, {UT}, 1993), IAS/Park City Math. Ser., vol.~3, American
  Mathematical Society, 1997, pp.~3--159.

\bibitem[SB83]{bgd4}
N.~Shepherd-Barron, \emph{Extending polarisations on families of {K}3
  surfaces}, The Birational Geometry of Degenerations (R.~Friedman and
  D.~Morrison, eds.), Progr. Math., vol.~29, Birkh{\"{a}}user, 1983,
  pp.~135--161.

\bibitem[Sca87]{cmsak3s}
F.~Scattone, \emph{On the compactification of moduli spaces for algebraic {K3}
  surfaces}, Mem. Amer. Math. Soc. \textbf{70} (1987), no.~374.

\end{thebibliography}
\bibliographystyle{amsalpha}
\end{document}